\let\oldtocsection=\tocsection
\let\oldtocsubsection=\tocsubsection
\let\oldtocsubsubsection=\tocsubsubsection
\renewcommand{\tocsection}[2]{\hspace{0em}\oldtocsection{#1}{#2}}
\renewcommand{\tocsubsection}[2]{\hspace{1em}\oldtocsubsection{#1}{#2}}
\renewcommand{\tocsubsubsection}[2]{\hspace{2em}\oldtocsubsubsection{#1}{#2}}
\newtheorem{theorem}{Theorem}[section]
\newtheorem{lemma}[theorem]{Lemma}
\newtheorem{proposition}[theorem]{Proposition}
\newtheorem{corollary}[theorem]{Corollary}
\theoremstyle{definition}
\newtheorem{definition}[theorem]{Definition}
\newtheorem{example}[theorem]{Example}
\theoremstyle{remark}
\newtheorem{remark}[theorem]{Remark}
\numberwithin{equation}{section}
\numberwithin{figure}{section}
\newcommand{\NN} {\mathbb{N}}
\newcommand{\ZZ} {\mathbb{Z}}
\newcommand{\RR} {\mathbb{R}}
\newcommand{\CC} {\mathbb{C}}
\renewcommand{\AA} {\mathbb{A}}
\newcommand {\shA} {\mathcal{A}}
\newcommand {\shM} {\mathcal{M}}
\newcommand {\shN} {\mathcal{N}}
\newcommand {\shQ} {\mathcal{Q}}
\newcommand {\shP} {\P}
\newcommand {\shX} {\mathcal{X}}
\newcommand {\Aff} {\operatorname{Aff}}
\newcommand {\GL} {\operatorname{GL}}
\newcommand {\gp} {{\operatorname{gp}}}
\newcommand {\Hom} {\operatorname{Hom}}
\newcommand {\id} {\operatorname{id}}
\newcommand {\Int} {\operatorname{int}}
\renewcommand {\ker } {\operatorname{ker}}
\newcommand {\lra} {\longrightarrow}
\newcommand {\M} {\mathcal{M}}
\renewcommand{\O} {\mathcal{O}}
\renewcommand {\P} {\mathscr{P}}
\newcommand {\sing} {\mathrm{sing}}
\newcommand {\Spec} {\operatorname{Spec}}
\newcommand {\ul} {\underline}
\newcommand {\ol} {\overline}
\newcommand {\X} {\mathcal{X}}
\newcommand\restr[2]{{% we make the whole thing an ordinary symbol
  \left.\kern-\nulldelimiterspace % automatically resize the bar with \right
  #1 % the function
  \vphantom{\big|} % pretend it's a little taller at normal size
  \right|_{#2} % this is the delimiter
  }}
\newcommand{\bigslant}[2]{{\raisebox{.2em}{$#1$}\left/\raisebox{-.2em}{$#2$}\right.}}
\newcommand{\xMapsto}[2][]{\ext@arrow 0599{\Mapstofill@}{#1}{#2}}
\def\Mapstofill@{\arrowfill@{\Mapstochar\Relbar}\Relbar\Rightarrow}
\newtheoremstyle{cited}%
  {3pt}% (space above)
  {3pt}% (space below)
  {\itshape}% (body font)
  {}% (indent amount)
  {\bfseries}% {theorem head font}
  {.}% {punctuation after theorem head}
  {.5em}% {space after theorem head}
  {\thmname{#1} \thmnumber{#2} \thmnote{\normalfont#3}}% {theorem head spec}
\theoremstyle{cited}
\begin{document}

%===========================================================
\title[Topological Torus Fibrations on Calabi--Yau Manifolds]{Topological Torus Fibrations on Calabi--Yau Manifolds via Kato--Nakayama Spaces}

% author one information
\author{H\"ulya Arg\"uz} 
%\curraddr{}

\thanks{This project received funding from the Fondation Math\'ematiques Jacques Hadamard.}

\address{H\"ulya Arg\"uz, Labaratoire de Math\'ematiques de Versailles, Universit\'e de Versailles St-Quentin-en-Yvellines, 78000/France}
\email{nuromur-hulya.arguz@uvsq.fr}
%\date{\today}
\maketitle

\begin{abstract}
A large class of complex algebraic varieties admit degenerations into toric log Calabi--Yau spaces, formed by unions of toric varieties glued along toric strata. Such degenerations were introduced by Gross and Siebert as toric degenerations. This paper is an expository article on toric degenerations of Calabi--Yau manifolds and Kato--Nakayama spaces. We first review the combinatorial data used to reconstruct a toric degeneration from a toric log Calabi--Yau space $X_0$, via the Gross--Siebert reconstruction algorithm. We then explain how one can understand the total space on a topological level from the Kato--Nakayama space of $X_0$, which is defined in terms of this combinatorial data. We illustrate through a concrete example, focusing on a degeneration of $K3$-surfaces, that the Kato--Nakayama space of $X_0$ defines a homeomorphism to the total space restricted to the inverse image of a circle on the base. We also investigate torus fibrations on the general fiber by further analysis of the topology of the Kato--Nakayama space. The proofs of the results presented here appear in joint work with Bernd Siebert \cite{AS}. 
\end{abstract}

\tableofcontents
%\bigskip

%===========================================================
%===========================================================

%===========================================================
%\title[Log GW-invariants]{Logarithmic Gromov-Witten Invariants}

\tableofcontents
\bigskip

%===========================================================
%===========================================================
%\section*{Introduction}
%The purpose of this... 

%===========================================================
\section{Introduction}
One of the major conjectures in mirror symmetry is the Strominger--Yau--Zaslow Conjecture \cite{SYZ}, which postulates that there exists dual special Lagrangian torus fibrations on mirror pairs of Calabi--Yau manifolds. After weakening the special Lagrangian assumption, the construction of such torus fibrations has been carried out in some cases in \cite{GrossInv,HZ}. These constructions lead to powerful insights into understanding the topology and geometry of mirror pairs of Calabi--Yau manifolds \cite{GrossTopology, GrossGeometry}. One of the major achievements in this context is an algebro-geometric approach to the SYZ-conjecture developed by Gross and Siebert \cite{Gross-Siebert}. The  essential  motivation  of  this approach  is  to consider a toric degeneration
\begin{equation}
    \label{Eq: pi}
    \pi : \mathcal{X} \to \Spec \CC[t]
\end{equation}
of Calabi--Yau varieties into unions of toric varieties, glued along toric strata. One furthermore imposes these degenerations to be of toroidal nature near the $0$-dimensional toric strata \cite{Gross-Siebert5}. In general, such degenerations are over a discrete valuation ring, which throughout this article we assume is given by \[\AA^1= \Spec \CC [t].\] The idea of the Gross--Siebert program, which we review in \S\ref{Sec: reconstruction}, is then to investigate mirror symmetry around the central fiber of such degenerations. To do this, one uses combinatorial data associated to the central fiber, that is encoded in a \emph{log structure}, given by a sheaf of monoids on it. We discuss log structures along with several examples in \S\ref{Sect: logarithmic geometry}. 

Though one can in principle impose several different log structures on a scheme, the one we focus on to understand the topology of toric degenerations arises as a structure of pairs $(X,D)$, given by a scheme, or a complex analytic space $X$, and a divisor $D\subset X$. The log structure $\M_X$ on $X$ is the sheaf of regular functions on $X$ that are invertible away from $D$. We use the notation
\[  X^{\dagger} = (X,\mathcal{M}_X) \]
for a log space. By a pull-back, the log structure on $X$ induces a log structure on $D$. When studying toric degenerations, we study the pair $(\mathcal{X},\mathcal{X}_0)$, 
formed by the total space and the central fiber $\mathcal{X}_0 \subset \mathcal{X}$. This defines the log structure on the central fiber $\mathcal{X}_0$. If at a general point of the singular locus of $\mathcal{X}_0$ where two irreducible components meet, the local equation inside the total space $\mathcal{X}$ is given as $xy=f\cdot t^e$ for some function $f$, the log structure captures $e\in \mathbb{N}$ and the restriction of $f$ to $x=y=0$.

On $\AA^1$, the base of a toric degeneration, we also consider the  standard log structure on the closed point $\Spec \CC \subset \AA^1$, defined by the pair $(\AA^1,(0))$. The map $\pi$ in \eqref{Eq: pi} lifts to a map between log schemes, and after restriction  to the central fiber, induces a morphism
\begin{equation}
    \label{Eq: pi-0}
    \pi_0^{\dagger}: \mathcal{X}_0^{\dagger} \lra \Spec \CC^{\dagger} 
\end{equation}
A log space formed by a union of toric varieties, along with a morphism $\pi_0^{\dagger}$ to the standard log point, is called a \emph{toric log Calabi--Yau space}, if the log structure is sufficiently nice -- see \cite[Defn 4.3]{Gross-Siebert3}. We can define a \emph{generalised momentum map}
\begin{equation}
\label{Eq: momentum}
    \mu: \mathcal{X}_0 \to B 
\end{equation}
from a toric log Calabi-Yau space, which restricts to the standard toric momentum map on all toric irreducible components of the central fiber. Thus, $B$, referred to as the intersection complex of $\mathcal{X}_0$, is formed by the union of the momentum polytopes corresponding to toric irreducible components of $\shX_0$. We review in \S\ref{Sect: toric varieties} how to describe these momentum polytopes, by defining the affine moment map without a reference to a particular symplectic structure. The attaching maps of these different momentum polytopes are determined by intersection patterns of the different toric irreducible components of $\shX_0$. Under suitable assumptions on these intersection patterns, imposed in the definition of a toric degeneration, $B$ is a topological manifold, which carries the structure of an \emph{integral affine manifold with singularities} as discussed in \S\ref{Sec: reconstruction}, where the singular (discriminant) locus is given by a codim $2$ submanifold $\Delta \subset B$. Moreover, this affine structure is uniquely determined by the log structure on $\mathcal{X}_0$. Conversely, under certain restrictions on the affine monodromy on $B$, a choice of \emph{lifted gluing data}, given by
\[  s \in H^1(\iota_{\star} \breve{\Lambda} \otimes \CC^{\times}) \]
where $\iota:B \setminus \Delta \hookrightarrow B$ is the inclusion map and $\breve{\Lambda}$ is the sheaf of integral cotangent vectors, uniquely determines a toric Calabi--Yau \cite[\S4]{Gross-Siebert3}. The gluing data is the technical heart of the combinatorial data needed to reconstruct toric degenerations from integral affine manifolds with singularities. We review this in \S\ref{Sec: reconstruction}, and for a comprehensive discussion of the gluing data we refer to \cite[\S5.2]{GHS}. 

In general, given a union of toric varieties glued along toric strata, it is a challenging question to determine the obstructions to smoothing it. Moreover, even if the existence is ensured, the uniqueness would not hold in general, as there can be different toric degenerations of different Calabi--Yau manifolds with isomorphic central fibers on a scheme theoretic level. However, one of the major results of the Gross-Siebert program states that once fixing a toric log Calabi--Yau structure, which roughly amounts to fixing an integral affine manifold with singularities along with gluing data, there is a canonical way to smooth it, and to construct the coordinate ring of the total space \cite{Gross-Siebert}. This construction is done by an inductive procedure involving a wall structure on $B$. 

Athough the construction of the coordinate ring of the total space of a toric degeneration using the Gross--Siebert machinery is typically impossible to carry through in practice, many features of the family are already contained in the log structure on $\mathcal{X}_0$. In \cite{AS} we show that indeed essential topological information about the family can be read off from the toric log Calabi--Yau structure, by studying Kato--Nakayama spaces associated to log spaces \cite{KN}. We review the construction of Kato--Nakayama spaces, and the topology on these spaces in \S\ref{par: The Kato Nakayama Space}. We show that for a toric variety $X$, which carries a natural fine log structure defined by the pair $(X,D)$, where $D$ is the toric boundary divisor, the Kato--Nakayama space is rather easy to describe, and is homeomorphic to the product of the associated momentum polytope with a compact torus. For instance, the Kato--Nakayama space of $\AA^1$, which is obtained by an oriented real blow up at the origin, is homeomorphic to $S^1 \times \RR_{\geq 0}$. Since the restriction of it to the closed point $(0) \in \AA^1$ is $S^1$, by functoriality of the construction of Kato--Nakayama spaces there is a natural map 
\begin{equation}
\label{Eq: KN space over X0}    
    \pi_0^{KN}: \mathcal{X}_0^{KN} \lra (0)^{KN} \simeq S^1 
\end{equation}
induced by the morphism \eqref{Eq: pi-0}.

In \cite{NO}, it is shown that the lift of a log morphism between log spaces $X^{\dagger}$ and $Y^{\dagger}$ to the level of Kato--Nakayama spaces, 
\[  X^{KN} \lra Y^{KN}, \]
in good cases, is a topological fiber bundle, that is, a continuous, surjective map, satisfying local triviality. Following \cite{NO}, we prove the following result in \cite[\S 4]{AS}
\begin{theorem}
\label{Thm: AS}
The map $\pi_0^{KN}:
\mathcal{X}_0^{KN}\to  S^1$ in \eqref{Eq: KN space over X0}, is a topological fiber bundle, and $\mathcal{X}_0^{KN}$ is homeomorphic to the restriction of any analytification $\shX^{an}$ of $\shX$ to a sufficiently small circle $S^1 \subset
D$, where $D$ denotes the unit disc in $\CC$. In particular, the restriction of $\pi_0^{KN}$ to a fibre over $\xi\in S^1$, denoted by $\mathcal{X}_0^{KN}(\xi)$, induces a homeomorphism
\begin{equation}
\nonumber
\label{Eq: restriction to a phase}
\shX_t \simeq \mathcal{X}_0^{KN}(\xi)
\end{equation}
between the general fibre of a toric degeneration and $\mathcal{X}_0^{KN}(\xi)$.
\end{theorem}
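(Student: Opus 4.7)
The plan is to invoke the Nakayama--Ogus fiber bundle theorem from \cite{NO} for the log morphism $\pi_0^{\dagger}$, and then to compare $\mathcal{X}_0^{KN}$ with the analytic family by passing through the Kato--Nakayama space of the total log morphism $\pi^{\dagger}:\mathcal{X}^{\dagger}\to(\AA^1)^{\dagger}$, where the log structures are divisorial from the pairs $(\mathcal{X},\mathcal{X}_0)$ and $(\AA^1,(0))$.

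First I would verify that $\pi_0^{\dagger}$, and more generally $\pi^{\dagger}$, is log smooth, exact, and integral (saturated). Near a codimension-one stratum the local description $xy = f\cdot t^e$ of the toric degeneration produces a chart sending $1\in\NN$ into the monoid $\NN^2$ by $n\mapsto (en,en)$, together with the section $f$, and this chart is manifestly exact; near a $0$-dimensional stratum one uses the toroidal structure imposed in the definition of a toric degeneration (see \cite{Gross-Siebert5}) to write down a chart of fs monoids with the same properties, coming from the cone over the momentum polytope of a top-dimensional stratum in $B$. Applying the Nakayama--Ogus fiber bundle theorem to this situation produces the topological fiber bundle structure on $\pi_0^{KN}:\mathcal{X}_0^{KN}\to (0)^{KN}\simeq S^1$, which is the first assertion of the theorem.

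For the comparison with the analytification, note that $(\AA^1)^{KN}$ is the oriented real blowup of $\AA^1$ at the origin, homeomorphic to $S^1\times\RR_{\geq 0}$, with the boundary circle identified with $(0)^{KN}$. By functoriality the induced map $\pi^{KN}:\mathcal{X}^{KN}\to S^1\times\RR_{\geq 0}$ restricts to $\pi_0^{KN}$ over $S^1\times\{0\}$, while over $S^1\times\RR_{>0}$ the log structures on both source and target are trivial, so $\mathcal{X}^{KN}|_{S^1\times\{\epsilon\}}$ is canonically identified with the analytic preimage $(\pi^{\an})^{-1}(\{|t|=\epsilon\})$. Since Nakayama--Ogus applied to $\pi^{\dagger}$ shows that $\pi^{KN}$ is a topological fiber bundle over $S^1\times\RR_{\geq 0}$, and since $S^1\times[0,\epsilon]$ deformation retracts onto either boundary circle, Ehresmann-type transport yields a homeomorphism $\mathcal{X}_0^{KN}\simeq \shX^{\an}|_{|t|=\epsilon}$ for $\epsilon$ small enough. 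Restricting this homeomorphism to a single point $\xi\in S^1$ then produces the identification $\shX_t\simeq \mathcal{X}_0^{KN}(\xi)$ for the nearby smooth fiber $\shX_t$, giving the remaining assertion.

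The main technical obstacle I expect is precisely the hypothesis check for Nakayama--Ogus near the $0$-dimensional toric strata of $\mathcal{X}_0$, where the local geometry is genuinely toroidal rather than nodal. Here one must match a formal neighbourhood of $\mathcal{X}_0\subset\mathcal{X}$ with an explicit toric chart arising from the fan over the corresponding momentum polytope in $B$, and verify exactness of the induced monoid homomorphism $\NN\to P$; this is a careful but local computation that is made tractable by the toric log Calabi--Yau hypothesis on $\mathcal{X}_0^{\dagger}$. Once log smoothness and exactness are in place, the global fiber bundle structure is obtained by gluing the local product charts produced by Nakayama--Ogus, and the deformation-retract argument above provides the homeomorphism with the circle of nearby fibers in a uniform way.
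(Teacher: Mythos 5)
Your overall strategy --- reduce to the relative rounding theorem of Nakayama--Ogus for $\pi_0^{\dagger}$ and for $\pi^{\dagger}$, identify $\mathcal{X}^{KN}$ over $S^1\times\{\epsilon\}\subset(\AA^1)^{KN}$ with the analytic family over $|t|=\epsilon$, and transport along the bundle --- is indeed the route indicated here and carried out in \cite{AS}. But there is a genuine gap in where you locate the difficulty. You propose to verify the hypotheses of \cite{NO} (fine/fs log structures, log smoothness, exactness) near codimension-one strata and near the zero-dimensional toric strata, and you flag the latter as the main obstacle. In fact the zero-dimensional strata are unproblematic: the definition of a toric degeneration (Definition \ref{Def: toric degeneration}) builds in that $\pi$ is toroidal there, so one has an honest toric chart $\Spec\CC[P]\to\Spec\CC[\NN]$, $t=z^{\rho_P}$, and exactness is immediate. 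The real obstruction is the log singular locus $Z\subset\shX$, which sits inside the codimension-one strata at the points where the function $f$ in the local equation $xy=f\cdot t^{e}$ vanishes. There the local model is of conifold type ($xy=twu$, etc.), the divisorial log structure $\M_{(\shX,\shX_0)}$ is \emph{not} coherent, the induced log structure on $\shX_0$ is not fine (as stressed in \S\ref{sec: example}: ``there does not exist a chart''), and $\pi^{\dagger}$ is not log smooth. Consequently \cite{NO} simply cannot be invoked on all of $\shX_0^{KN}$, and your gluing of local product charts breaks down exactly over $\mu^{-1}(\Delta)$. The substantive content of the proof in \cite{AS} is the extension of the fiber bundle property across $Z$ by an explicit analysis of the Kato--Nakayama space of the local models near $Z$ (this is also where the singular, e.g.\ nodal, fibers of the torus fibration arise); your argument as written omits this step entirely.

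Two smaller points. First, your chart for $xy=t^{e}$ is off: sending $n\mapsto(en,en)\in\NN^2$ does not reproduce the relation; the correct local model is $P=\sigma^{\vee}\cap\ZZ^2$ with $m_x+m_y=e\rho_P$ and $\NN\to P$, $1\mapsto\rho_P$, which is exact --- the conclusion you want still holds, but not via the map you wrote. Second, once the bundle property is established over all of $S^1\times\RR_{\ge0}$ including the locus over $Z$, your deformation-retract/transport argument to compare $\shX_0^{KN}$ with $\shX^{\an}|_{|t|=\epsilon}$ is exactly the intended conclusion, so the remainder of your outline is sound.
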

Theorem \ref{Thm: AS} describes the topology of a toric degeneration, particularly the general fiber up to homeomorphism from the data of a toric log Calabi--Yau space. For similar results in the study of smooth affine hypersurfaces in smooth toric varieties see also \cite{RSTZ}. By studying the topology of the Kato--Nakayama space further in \cite{AS}, we show that that the composition of the natural retraction map $\mathcal{X}_0^{\mathrm{KN}} \to \mathcal{X}_0$ with the generalised momentum map $\mu$ in \eqref{Eq: momentum} is a topological torus fibration on $\mathcal{X}_0^{KN}$, and hence after restricting to $\xi\in S^1$ as in Theorem \ref{Thm: AS}, defines a torus fibration on the general fiber of a toric degeneration. Thus, focusing on Calabi--Yau manifolds which admit a toric degeneration, we obtain a method to construct topological torus fibrations on them. Such fibrations have smooth torus fibers away from the discriminant locus on the base $B$. To analyse the singular fibers we need to investigate closer the construction of the Kato--Nakayama space. We do this in the case of the degeneration of K3 surfaces in \S\ref{sec: example}. 

An advantage of studying torus fibrations on Calabi--Yau manifolds by Kato--Nakayama spaces is that rather than a single topological torus fibration, which is a continuous surjective map from the Calabi--Yau to $B$, whose general fibers are tori \cite{GrossInv}, we obtain a moduli space of Calabi--Yau manifolds and topological torus fibrations on them, parametrised by a choice of gluing data. Moreover, in \cite{AS} we show that this construction can be carried compatibly with real structures, and in this way we obtain topological descriptions of the real loci in Calabi--Yau manifolds, from Kato--Nakayama spaces. This perspective lead to several further conclusions on the topology of the real loci in some three dimensional Calabi--Yau manifolds studied in \cite{AP}, and relations to Hodge-theoretic mirror symmetry \cite{AP2}. We note that though for simplicity in this article we focus on Calabi--Yau manifolds, this approach can also be carried in a more general setup, in the context of varieties with effective anti-canonical class \cite{AS,GHS}. For related work in the context of Fano manifolds see also \cite{Prince}. 

\subsection*{Acknowledgements} I am indebted to Mark Gross and Bernd Siebert for the many useful discussions that provided me invaluable insight, criticism, and encouragement throughout my mathematical journey. I am also grateful to Tom Coates and Dimitri Zvonkine for their support and helpful feedback which improved the exposition of this article. I also thank the organisers of the G\"okova Geometry Topology Conferences for providing a wonderful stimulating atmosphere, and for giving me the opportunity to contribute this paper to the conference proceedings. I am particularly grateful to the referee for the very careful reading and many useful comments.

\section{Monoids and Log Structures}
\label{Sect: logarithmic geometry}
In this section we first review some basics on monoids, and then define log structures. For details we refer to \cite{Ogus,Kato}.

\begin{definition}
A {\it monoid} is a set $\shM$ with an associative binary operation with
a unit. The monoid operation is usually written multiplicatively, in which case we will denote the identity by $1$. A {\it homomorphism of monoids} is a function $\beta:\shP\to\shQ$ between monoids such that
$\beta(1)=1$ and $\beta(p\cdot p')=\beta(p)\cdot\beta(p')$. Throughout this paper we assume all monoids are commutative.
\end{definition}
The basic example of a monoid is the set of natural numbers $\NN$, under the addition operation. 

\begin{definition} 
The {\it Grothendieck group of a commutative monoid $\shM$} is the abelian group generated by $\shM$, denoted by $\shM^{gp}$ defined by
\[\shM^{gp}~:=~\{ \shM \times \shM / \sim \}\]
with the equivalence relation compatible with the monoid operation defined as $(x,y)\sim(x',y')$ if and only if there exists an element $p\in\shM$ such that $pxy' = pyx'$.
\end{definition}
Note that $\shM^{gp}$ is the smallest group containing $\shM$, and there is a natural map from $\shM$ to its associated Grothendieck group $\shM^{gp}$ sending an element $p\in\shP$ to the equivalance class $(p,1)$ denoted by $p/1$. Hence, for any abelian group $G$ and a monoid $\shM$, we have $\mathrm{Hom}_{Mon}(\shM,G) = \mathrm{Hom}_{Ab}(\shM^{gp},G)$. 

\begin{definition}
\label{integral monoid}
A monoid $\shM$ is called {\it integral } if the cancellation law holds (i.e. if $xy =x'y$ then $x = x'$), and it is called {\it fine } if it is finitely generated and integral. We call an integral monoid $\shM$ \emph{saturated}, if whenever $p \in \shM^{\gp}$, and $n$ is a positive integer such that $np \in \shM$ then $p\in \shM$. A monoid $\shM$ is called {\it toric} if it is fine, saturated and $\shM^{gp}$ is torsion free.
\end{definition}

\begin{definition}
Let $X$ be an analytic space with the usual analytic topology, or generally a scheme so that the underlying space is endowed with the \'etale topology. A \emph{pre log structure} on $X$ is a sheaf of monoids $\shM$ on $X$ together with a homomorphism of monoids $\beta :\shM \lra \mathcal{O}_X$ where we consider the structure sheaf $\mathcal{O}_X$ as a monoid with respect to multiplication.
\end{definition}

\begin{definition}
\label{log structure}
A pre log structure on $X$ with the morphism of monoids $\alpha :\shM \lra (\mathcal{O}_X,\cdot)$ is called a \emph{log structure} if $\alpha$ induces an isomorphism\\ $$\restr{\alpha}{\alpha^{-1}(\mathcal{O}_X^{\times})}:{\alpha}^{-1}(\mathcal{O}_X^{\times})\lra \mathcal{O}_X^{\times} $$
We will call a scheme $X$ associated with a log structure $\alpha_X:\shM_x\lra\mathcal{O}_X$ a log scheme and denote it by $(X,\shM_X)$ or by $X^{\dagger}$. We refer to the homomorphism $\alpha_X$ as the structure homomorphism.
\end{definition}

\begin{definition}
Given a log scheme $(X,\shM_X)$ we define $\overline \shM_X:=\bigslant{\shM_X}{\mathcal{O}_X^{\times}}$, and refer to it as the \emph{ghost sheaf} of $(X,\shM_X)$.
\end{definition}

\begin{example}
 Let $X$ be a scheme. Let $\shM_X:= \mathcal{O}_X^{\times}$, and $\alpha_X:\mathcal{O}_X^{\times}\lra \mathcal{O}_X$ 
  be the inclusion. Clearly, this defines a log structure on $X$, called the trivial log structure.
\end{example}

\begin{example}
Let $X:=\Spec \CC$, $\shM_X:= \CC^{\times} \oplus \NN$, and
define $\alpha_X:\M_X \to \CC$ as follows.

\begin{center}

$\alpha_X ( x , n ) := \left\{
	\begin{array}{ll}
		x  & \mbox{if } n = 0 \\
		0 & \mbox{if } n \neq 0
	\end{array}
\right.
$
\end{center}
A point $\Spec \CC$ together with this log structure is called the standard log point.

\end{example}

\begin{example} Let $X:=\Spec \CC$, and 
$\shM_X:=S^1\times \mathbb{R}_{\geq0}$. Define
\begin{eqnarray}
\nonumber
\alpha:S^1\times \mathbb{R}_{\geq0}\rightarrow \mathbb{C}\\
\nonumber
(e^{i\phi},r)\rightarrow re^{i\phi}
\end{eqnarray}
This is by definition a prelog structure on $X$. Observe that  
$$\restr{\alpha}{{\alpha}^{-1}(\mathbb{C}^{\times})}:\mathbb{C}^{\times} \rightarrow \mathbb{R}_{>0}\times S^1 $$
$$z\rightarrow (|z|,arg(z))$$
 is an isomorphism. Indeed we have $(\alpha|_{\alpha^{-1}(\mathbb{C}^{\times})})^{-1}=\mathbb{R}_{>0}\times S^1$. Thus, $\alpha$ gives a log structure on $\Spec \CC$ and together with this log structure $\Spec \CC$ is called the polar log point.
\end{example}

\begin{example} 
\label{The divisorial log structure} 

Let $X$ be a scheme, and $D\subset X$ be a divisor. Let $j:X\setminus D\to X$ be the embedding of the
complement. Define
$$\shM_{(X,D)}:=j_*(\mathcal{O}_{X\setminus D}^\times)\cap \mathcal{O}_X$$ So, $\shM_{(X,D)}$ is the sheaf of regular functions on $X$ with zeroes in $D \subset X$, namely the regular functions on $X$ which are units on $X\setminus D$. We set $\alpha_X:\shM_{(X,D)} \hookrightarrow \mathcal{O}_X$ to be the natural inclusion. Hence we get $\alpha^{-1}(\mathcal{O}_X^{\times})=\mathcal{O}_X^{\times} \subset M_{(X,D)}$, which shows  \[\restr{\alpha}{\mathcal{O}_X^{\times}}:{\alpha}^{-1}(\mathcal{O}_X^{\times})\lra \mathcal{O}_X^{\times}\] is an isomorphism. We refer to  $\shM_{(X,D)}$ as the \emph{divisorial log structure} on $X$.
\end{example}

\begin{remark}
There is a short exact sequence \[0\lra \mathcal{O}_X^{\times}\lra \shM_X^{gp} \lra \bigslant{\shM_X^{gp}}{\mathcal{O}_X^{\times}}\lra 0.\] If we have a log scheme $(X,\shM_{(X,D)})$ equipped with the divisorial log structure then $\bigslant{\shM^{gp}_X}{\mathcal{O}_X^{\times}}$ is the sheaf of Cartier divisors on $X$ with support in $D$.
\end{remark}

\begin{remark}
If $X$ is a scheme with log structure $\shM_X$ then from the definition \ref{log structure} it follows that the only invertible section of $\overline \shM_X:=\bigslant{\shM_X}{\mathcal{O}_X^{\times}}$ is the identity. This means that $\overline \shM_X$ is torsion free.

\end{remark}

\begin{example}
Let $X=\mathbb{A}^2$ be the affine plane with the toric invariant divisor $D= \{xy=0 \}$. The sheaf of monoids $M_{(\mathbb{A}^2,D)}$ on $\mathbb{A}^2$ is the sheaf of regular functions on $\mathbb{A}^2$ which are invertible on $X\setminus D$. So, the stalk of  $\overline \shM_{(\mathbb{A}^2,D)}$ over $0$, $\overline \shM_{(\mathbb{A}^2,D),0}$, is isomorphic to $\mathbb{N}^2$. To generalise, when we have a normal crossing divisor $D=(z_1\cdots z_k=0)\subset X$ then  
\[\overline \shM_{(X,D),0}=\mathbb{N}_{D_1} \oplus \cdots \oplus \mathbb{N}_{D_k},\] 
where $\mathbb{N}_{D_i}$ denotes the constant sheaf $\mathbb{N}$. Furthermore, the stalk over a geometric point $\overline{x}$ is isomorphic to the direct sum of $r$ copies of $\mathbb{N}$ where $r$ denotes the number of components of $D$ containing $\overline{x}$.
\end {example}

\begin{definition}
\label{Log pre log}
Let $\alpha:\shM\lra (\mathcal{O}_X,\cdot)$ be a prelog structure on $X$. One can force a log structure on $X$, by defining 
$$\shM^a:=\bigslant{\shM\oplus \mathcal{O}_X^{\times}}
{\{(p,\alpha(p)^{-1})\,\big|\, p\in \alpha^{-1}(\O^\times_X)\big\}}$$
and setting
$$\alpha^a(p,h)= h\cdot \alpha(p)$$

Let us check that the pair $(\shM^a,\alpha^a)$ is a log structure on $X$. For this, we need to show that the map $\restr{\alpha^a}{(\alpha^a)^{-1}(\mathcal{O}_X^{\times})} $ is an isomorphism. Clearly, $\restr{\alpha^a}{(\alpha^a)^{-1}(\mathcal{O}_X^{\times})} : (\alpha^a)^{-1}(\mathcal{O}_X^{\times}) \to \mathcal{O}_X^{\times}$ is surjective, since for any $a \in \mathcal{O}_X^{\times}$ we have $\alpha^a(1,a)=a$. Let $\overline{(x,a)} \in \ker \restr{\alpha^a}{(\alpha^a)^{-1}(\mathcal{O}_X^{\star})}$ we will show $\overline{(x,a)}=(1,1)$. Here we use multiplicative notation and denote the identity elements of the monoids $\mathcal{O}_X$ and $\shM$ by $1$ and the identity element of $\shM^a$ by $(1,1)$. So, $\alpha^a(\overline{(x,a)})=1 \Rightarrow \alpha(x) \cdot a =1 \Rightarrow (x,a)=(x,\alpha(x)^{-1})$ and $x \in \alpha^{-1}(\O^\times_X)$.
Note that under the equivalence relation on $\shM^a$, two sections $(x,a)$ and $(y,b)$ in $\shM^a$ are equal if there are local sections $\alpha(p)$ and $\alpha(q)$ of $\mathcal{O}_X^{\times}$ such that $(x,a) \cdot (q,\alpha(q)^{-1})=(y,b) \cdot (p,\alpha(p)^{-1})$. Thus, we obtain $(x,a) \sim (1,1)$. Hence, the result follows. 
\end{definition}

\begin{definition}
\label{log morphism}
If $X$ and $Y$ are log schemes with sheafs of monoids $\shM$ and $\shN$, then we define a morphism $(f,h)$ from $(X,\shM)$ to $(Y,\shN)$ so that $f:X\lra Y$ is a morphism of the underlying schemes and $h:f^{-1}\shN\lra \shM$ is a homomorphism of sheafs of monoids where $f^{-1}\shN$ denotes the inverse image of the sheaf $\shN$ so that the following diagram commutes

\begin{displaymath}
    \xymatrix{
        f^{-1}\shN \ar[r]^h \ar[d] & \shM \ar[d] \\
        f^{-1}\mathcal{O}_Y \ar[r]       & \mathcal{O}_X }
\end{displaymath}  
We call $f:=(f,h)$ a {\it morphism of log schemes}.
\end{definition}

\begin{definition}
\label{induced log structure}
Let $Y$ be a log scheme, endowed with a log structure $\alpha_Y:\shM_Y\lra \mathcal{O}_Y$, and let $f:X\to Y$ be a morphism of schemes. Consider the composition 
\[f^{-1}\shM_Y\to f^{-1}\mathcal{O}_Y \to \mathcal{O}_X\] which defines a pre log structure on $X$. The log structure associated to this pre log structure is called the induced log structure or the pull back log structure on $X$ and is denoted by $\shM_X = f^* \shM_Y$.
\end{definition}

Note that if we have a morphism $f:X\to Y$ of schemes where $Y$ is equipped with a log structure $\shM_Y$, for the induced log structure on $X$ we use the notation $f^*\shM_Y$ in Definition \ref{induced log structure}, not to confuse it with $f^{-1}\shM_Y$, as used in Definition \ref{log morphism}.

\begin{definition}
Let $X$ be a scheme, and let $\P$ be a finitely generated monoid. Denote by $\P_X$ the constant sheaf corresponding to $\P$. 
A log structure $\shM$ is called {\it coherent} if \'etale locally on $X$ there exists  and a homomorphism $\P_X\to\mathcal{O}_X$ whose associated log structure is isomorphic to $\shM$. We call $\shM$ {\it integral} if it is a sheaf of integral monoids. If $\shM$ is both coherent and integral then it is called {\it fine }. 
\end{definition}

\begin{definition}
For a scheme $X$ with a fine log structure $\shM$ a {\it chart for $\shM$ } is a homomorphism $\P_X \to \shM$ for a finitely generated integral monoid $\P$ which induces $\P^a\cong \shM$ over an \'etale open subset of $X$. Recall that as $\P_X$ we denote the constant sheaf $P$ on $X$.
For morphism $f:(X,\shM)\to(Y,\shN)$ of schemes with fine log structures a {\it chart for f } is a triple $(\P_X\to\shM, \shQ_Y\to\shN, \shQ\to\P)$ where $\P_X\to\shM$, $\shQ_Y\to\shN$ are charts of $\shM$ and $\shN$ respectively and $\shQ\to\P$ is a homomorphism for which the following diagram commutes.
\begin{displaymath}
    \xymatrix{
        \shQ_X \ar[r]^h \ar[d] & \shM_X \ar[d] \\
         f^{-1}\shN \ar[r]       & \shM }
\end{displaymath}  
\end{definition}
A chart of $f$ also exists \'etale locally. The following proposition is immediate from the definition of a chart.  
\begin{proposition}
\label{toric fine}
Let $X$ be an affine toric variety such that $X= \Spec \CC [\check{\sigma} \cap N]$. Let D be the canonical divisor on $X$ and let $\shM_{X,D}$ be the corresponding divisorial log structure. Then $\shM_{X,D}$ is fine, and a chart for $\shM_{X,D}$ is given by $\check{\sigma} \cap N \to \CC [\check{\sigma} \cap N]$.
\end{proposition}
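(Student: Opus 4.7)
The plan is to establish the proposition by directly verifying that the homomorphism $P := \check{\sigma} \cap N \to \CC[\check{\sigma} \cap N]$, sending a monoid element $p$ to the corresponding monomial $z^p$, provides a chart. Once this is done, fineness of $\shM_{(X,D)}$ follows automatically: the monoid $P$ is finitely generated by Gordan's lemma (since $\check{\sigma}$ is a rational polyhedral cone), and it is integral as a submonoid of the free abelian group $N$. The combination of finite generation and integrality is precisely what is required for the associated log structure to be fine.

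To construct the chart, I would first note that the map $\alpha : P_X \to \mathcal{O}_X$, $p \mapsto z^p$, is a pre-log structure. For every $p \in P$ the monomial $z^p$ is invertible on the open torus $T = X \setminus D$, with inverse $z^{-p} \in \CC[N] = \mathcal{O}_T$, so $\alpha$ factors through the inclusion $\shM_{(X,D)} \hookrightarrow \mathcal{O}_X$ from Example \ref{The divisorial log structure}. By the universal property used in Definition \ref{Log pre log}, this yields a canonical morphism of log structures $\varphi : P^a \to \shM_{(X,D)}$ compatible with the respective structure maps to $\mathcal{O}_X$. The content of the proposition is that $\varphi$ is an isomorphism.

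To verify this, I would check $\varphi$ on stalks at each geometric point $\bar{x} \in X$ lying in the torus orbit associated to a face $\tau \preceq \sigma$. On the one hand, $P^a_{\bar{x}} = (P \oplus \mathcal{O}_{X,\bar{x}}^\times)/\sim$, where the relation collapses exactly those $p \in P$ whose image $z^p$ lies in $\mathcal{O}_{X,\bar{x}}^\times$, i.e.\ those $p \in P \cap \tau^\perp$. On the other hand, a germ of a regular function at $\bar{x}$ that is invertible away from $D$ must be a monomial times a local unit — this is the standard local structure of toric boundary divisors. Matching these two descriptions then shows that $\varphi_{\bar{x}}$ is bijective. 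The main obstacle is this last stalk calculation: identifying the sections of $\shM_{(X,D)}$ near $\bar{x}$ with $(P/P\cap\tau^\perp)\oplus \mathcal{O}_{X,\bar{x}}^\times$ relies on the normality of $X$ and on the explicit description of the local ring at a toric stratum, which must be carried out with some care, but, as illustrated by the normal crossings computation in the example preceding Definition \ref{Log pre log}, is essentially combinatorial once the monomial decomposition is in hand.
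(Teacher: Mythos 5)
Your proof is correct, but it is worth noting that the paper does not actually supply an argument here: it simply declares the proposition ``immediate from the definition of a chart'' and moves on. What you have written is the standard verification that the paper leaves implicit, and you have located the one step that carries real content. The formal part (the monomial prelog structure $P_X\to\mathcal{O}_X$ factors through $\shM_{(X,D)}$ because characters are invertible on the torus, hence induces $\varphi\colon P^a\to\shM_{(X,D)}$; fineness then follows from Gordan's lemma plus integrality of $P\subset N$) is exactly as routine as you present it. The substantive step is the stalk computation: a germ $f\in\mathcal{O}_{X,\bar x}$ invertible off $D$ has principal, hence Cartier, divisor supported on the toric boundary, and a torus-invariant Cartier divisor at a point of the orbit of a face $\tau\preceq\sigma$ is the divisor of a character $z^m$ with $m\in\check\tau\cap N$; normality then forces $f/z^m$ to be a unit, and writing $m=p-q$ with $p\in P$ and $q\in P\cap\tau^\perp$ (using saturation of $P$ so that $(P\cap\tau^\perp)^{\gp}=\tau^\perp\cap N$) gives surjectivity of $\varphi_{\bar x}$, with injectivity following from the same identification of the units among monomials. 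You flag exactly this as the point requiring care, which is right; I would only caution that the direct-sum description of the stalk as $(P/P\cap\tau^\perp)\oplus\mathcal{O}_{X,\bar x}^\times$ depends on a (non-canonical) monomial splitting, so it is cleaner to argue surjectivity and injectivity of $\varphi_{\bar x}$ directly as above rather than via a claimed canonical decomposition. With that caveat, your argument is a complete proof of a statement the paper takes for granted.
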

\begin{example}
\label{toric}
Let $X=\bigslant{\mathrm{Spec}~~\mathbb{C}[x,y,w,t]}{(xy-wt)}$. The associated toric  fan for $X$ has ray generators \[\RR_{\geq0} (0,1,0)+\RR_{\geq0} (-1,0,1)+\RR_{\geq0} (0,-1,1)+\RR_{\geq0} (1,0,0).\] Hence, we have the following toric invariant divisors
$$ D_1=(y=w=0), ~~~~  D_2=(x=w=0), ~~~~ D_3=(x=t=0), ~~~~ D_4=(y=t=0)  $$
Let $D$ be the canonical divisor, that is, $D = -D_1 - D_2 - D_3 - D_4 $. Denote by $s_\star$ the sections of
a log structure $\mathcal{M}_{X,D}$, defined by the monomial functions indicated in the subscript. Note that sections $\Gamma(X,\shM_{X,D})$ of the log structure $\shM_{X,D}$ are $s_x,~s_y,~,s_w,~s_t$. Clearly these are regular functions having zeroes in $D$. A chart for $\shM_{X,D}$ is given by
\[ \shM= \langle e_1,\cdots e_4~|~e_1+e_2=e_3+e_4 \rangle \stackrel{\phi}{\longrightarrow}\Gamma(X,\shM_{X,D}) \]
where
\[ \phi(e_1)= x, \,\ \phi(e_2)= y, \,\ \phi(e_3)= w,\,\phi(e_4)= t \]

\end{example}

\begin{definition}
\label{strict}
A morphism $f:X\to Y$ between log schemes $(X,\shM_X)$ and $(Y,\shM_Y)$ is called {\it strict} if $f$ induces an isomorphism between $\shM_X$ and $f^*\shM_Y$.
\end{definition}

\begin{example}
Let $(X,\shM_X)$ be a fine log scheme with a chart $\shM\to \Gamma(X,\shM_X)$. Then $X\to \Spec (\CC [\shM])$ is a strict morphism of analytic spaces. Indeed, a chart $\shM\to \Gamma(X,\shM_X)$ is equivalent to a morphism $X\to \Spec (\CC [\shM])$.
\end{example}

\section{Toric Varieties and Affine Moment Maps}
\label{Sect: toric varieties}
Throughout this section, we assume basic familiarity with toric geometry \cite{Fulton}. We fix the lattices $M=\ZZ^n$, $N=\Hom_{\ZZ}(M,\ZZ)$ and we will denote by $M_{\RR}=M\otimes_{\ZZ}
\RR$, $N_{\RR}=N\otimes_{\ZZ} {\RR}$ the associated real vector spaces. If
$\sigma\subset N_\RR$ is a cone then the set of monoid homomorphisms
$\sigma^\vee= \Hom(\sigma,\RR_{\ge0}) \subset M_\RR$ denotes its
dual cone. A \emph{lattice polyhedron} is the intersection of
rational half-spaces in $M_\RR$ with an integral point on each
minimal face. Let $\Xi\subset M_\RR$ be a
full-dimensional, convex lattice polyhedron. Let $X$ be the
associated complex toric variety. A basic fact of toric geometry
states that the fan of $X$ agrees with the normal fan $\Sigma_\Xi$
of $\Xi$. From this description, $X$ is covered by affine toric
varieties $\mathrm{Specan}\CC[\sigma^\vee\cap M]$, for $\sigma\in
\Sigma_\Xi$. Since the patching is monomial, it preserves the real
structure of each affine patch. Hence the real locus
$\Hom(\sigma^\vee,\RR)\subset \Hom(\sigma^\vee,\CC)$ of each affine
patch glues to the real locus $X_\RR\subset X$. Unlike in the
definition of $\sigma^\vee$, here $\RR$ and $\CC$ are
multiplicative monoids. Moreover, inside the
real locus of each affine patch there is the distinguished subset
\begin{equation}
\label{eq: positive real locus}    
    \sigma=\Hom(\sigma^\vee,\RR_{\ge 0})\subset\Hom(\sigma^\vee,\RR),
\end{equation}
with ``$\Hom$'' referring to homomorphisms of monoid. These also
patch via monomial maps to give the \emph{positive real locus}
$X_{\ge0}\subset X_\RR$. Having introduced the positive real locus $X_{\ge0}\subset X_\RR$ we
are in position to define abstract momentum maps.

\begin{definition}
\label{Def: Momentum map}
Let $X$ be the complex toric variety defined by a full-dimensional lattice
polyhedron $\Xi\subset M_\RR$. Then a continuous map
\[
\mu: X\lra \Xi
\]
is called an \emph{(abstract) momentum map} if the following holds.
\begin{enumerate}
\item
$\mu$ is invariant under the action of $\Hom(M,U(1))$ on $X$.
\item
The restriction
of $\mu$ maps $X_{\ge0}$ homeomorphically to $\Xi$, thus defining a
section $s_0: \Xi\to X$ of $\mu$ with image $X_{\ge0}$.
\item
The map
\begin{equation}
\label{Eqn: torus times section}
\Hom(M,U(1))\times \Xi\lra X,\quad
(\lambda,x)\longmapsto \lambda\cdot s_0(x)
\end{equation}
induces a homeomorphism $\Hom(M,U(1))\times \Int(\Xi) \simeq
X\setminus D$, where $D\subset X$ is the toric boundary
divisor.
\end{enumerate}
\end{definition}

Projective toric varieties have a momentum map, see e.g.\ \cite{Fulton}, \S4.2.
For an affine toric variety $\mathrm{Specan}\CC[P]$, momentum maps also exist. One
natural construction discussed in detail in \cite{NO}, \S1, is a simple formula
in terms of generators of the toric monoid $P$ (\cite{NO}, Definition~1.2
and Theorem~1.4). Some work is however needed to show that if
$P=\sigma^\vee\cap M$, then the image of this momentum map is the cone
$\sigma^\vee$ spanned by $P$. We give here another, easier but somewhat ad hoc
construction of a momentum map in the affine case.

\begin{proposition}
\label{Prop: Momentum map for affine toric}
An affine toric variety $X=\mathrm{Specan} \CC[\sigma^\vee\cap M]$ has a 
momentum map with image the defining rational polyhedral cone
$\sigma^\vee\subset M_\RR$. 
\end{proposition}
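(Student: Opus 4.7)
The plan is to give an explicit construction based on a finite generating set of the toric monoid. By Gordan's Lemma, pick generators $p_1,\dots,p_s$ of $P:=\sigma^\vee\cap M$; they generate $\sigma^\vee$ as a cone in $M_\RR$, and the corresponding characters give a closed embedding $X\hookrightarrow \CC^s$, $x\mapsto(\chi^{p_1}(x),\dots,\chi^{p_s}(x))$. Then define
\[
\mu: X\lra M_\RR,\qquad \mu(x):=\sum_{i=1}^s |\chi^{p_i}(x)|\,p_i.
\]
Each summand is a non-negative real multiple of some $p_i\in\sigma^\vee$, so $\mu$ takes values in $\sigma^\vee$ and is manifestly continuous; the dependence on the choice of generators is what makes the construction ``somewhat ad hoc''.

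It remains to verify the three properties of Definition~\ref{Def: Momentum map}. Condition (1), invariance under $T_c:=\Hom(M,U(1))$, is immediate because $T_c$ multiplies each $\chi^{p_i}$ by a unit complex number. The heart of the argument is condition (2): that $\mu|_{X_{\ge 0}}$ is a homeomorphism onto $\sigma^\vee$. Identify $X_{\ge 0}$ with $\Hom_{\mathrm{Mon}}(P,(\RR_{\ge 0},\cdot))$ and parametrize its interior by $N_\RR$ via $n\mapsto (p\mapsto e^{\langle p,n\rangle})$; this is a bijection, since any multiplicative map $P\to \RR_{>0}$ extends uniquely through $P^{gp}=M$ to an element of $\Hom(M,\RR)=N_\RR$. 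Under this parametrization $\mu$ restricts to
\[
F:N_\RR\lra \Int\sigma^\vee,\qquad F(n)=\sum_i e^{\langle p_i,n\rangle}\,p_i=\nabla\Phi(n),
\]
where $\Phi(n):=\sum_i e^{\langle p_i,n\rangle}$ is strictly convex (its Hessian $\sum_i e^{\langle p_i,n\rangle}\,p_i\otimes p_i$ is positive definite because the $p_i$ span $M_\RR$). Hence $F$ is a local diffeomorphism; analysing the growth of $\Phi$ along rays (using $\langle p_i,\cdot\rangle\ge 0$ on $\sigma$ and $\le 0$ on $-\sigma$) shows $F$ is proper onto $\Int\sigma^\vee$, so a proper local diffeomorphism between simply connected spaces, hence a homeomorphism. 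Boundary strata of $X_{\ge 0}$ correspond to faces of $\sigma$ via the $T_c$-orbit decomposition of $X$ and map to the dual faces of $\sigma^\vee$, handled by induction on face dimension.

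Condition (3) then follows from (2) together with the polar decomposition of the big open torus $\Hom(M,\CC^\times)=T_c\times\Hom(M,\RR_{>0})$; the second factor is $\Int X_{\ge 0}$, which (2) identifies with $\Int\sigma^\vee$, so that the multiplication map $T_c\times\Int\sigma^\vee\to X\setminus D$ is a homeomorphism. The main obstacle is the global bijectivity of $F$ in condition (2): local invertibility is clean from strict convexity, but verifying properness demands a careful case analysis of sequences $n_k\to\infty$ in $N_\RR$ according to whether the escape direction lies in $\sigma$, in $-\sigma$, or in neither, so that in each case the image sequence exits every compact subset of $\Int\sigma^\vee$.
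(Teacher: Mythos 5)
Your construction is correct in outline, but it is a genuinely different route from the paper's. What you propose is essentially the explicit generator formula of Nakayama--Ogus that the paper mentions and then deliberately sidesteps, precisely because ``some work is however needed to show that \ldots the image of this momentum map is the cone $\sigma^\vee$''. The paper instead first splits off the torus factor so that $\sigma^\vee$ is strictly convex, embeds $X$ into a projective toric variety $\tilde X$ whose momentum polytope $\Xi$ is the convex hull of a vertex $0$ and a facet $\omega$ with $X=\mu^{-1}(\Xi\setminus\omega)$, and then radially rescales by $f\circ q$ with $f(x)=x/(a-x)$ to stretch $\Xi\setminus\omega$ onto $\sigma^\vee$. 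That argument buys all three conditions of Definition~\ref{Def: Momentum map} for free from the known projective case, at the cost of being ad hoc; your argument is self-contained and canonical up to the choice of generators, at the cost of having to prove the homeomorphism statement from scratch. Your treatment of the interior is sound: $\Phi(n)=\sum_i e^{\langle p_i,n\rangle}$ is strictly convex since the $p_i$ span $M_\RR$, positivity of the coefficients puts $F(n)$ in $\Int\sigma^\vee$, and for $m\in\Int\sigma^\vee$ the coercivity of $\Phi(n)-\langle m,n\rangle$ (exponential growth when the escape direction $u$ pairs positively with some $p_i$, i.e.\ $u\notin-\sigma$; linear decay of $\langle m,n\rangle$ when $u\in-\sigma\setminus\{0\}$) gives a critical point, hence surjectivity, and a bijective local diffeomorphism is a homeomorphism.

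The one place where your sketch is thin is exactly the step the paper's detour is designed to avoid: the boundary of $X_{\ge0}$. Showing that each stratum of $X_{\ge0}$ (monoid homomorphisms supported on a face $F$ of $\sigma^\vee$) maps homeomorphically onto the relative interior of $F$ does follow by the same convexity argument applied to the generators lying in $F$, but ``induction on face dimension'' only yields that $\mu|_{X_{\ge0}}$ is a continuous bijection onto $\sigma^\vee$; you still need continuity of the inverse across strata, i.e.\ properness of $\mu|_{X_{\ge0}}$. When $\sigma^\vee$ is strictly convex this is a one-line estimate (pair $\mu(h)=\sum_i h(p_i)p_i$ with an interior covector of $\sigma$ to bound all $h(p_i)$), but when $\sigma^\vee$ contains a linear subspace some $p_i$ are units, that pairing degenerates, and cancellation in $\sum_i h(p_i)p_i$ must be ruled out by hand. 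You should either supply that estimate or perform the paper's preliminary reduction $\sigma^\vee\simeq C+\RR^r$, $X\simeq\ol X\times(\CC^*)^r$, treating the torus factor by $\log|z_i|$ as the paper does.
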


\begin{proof}
If the minimal toric stratum $Z\subset X$ is of dimension $r>0$, we
can decompose $\sigma^\vee\simeq C+ \RR^r$ and acordingly $X\simeq \ol
X\times(\CC^*)^r$ with  $\ol X$ a toric variety with a
zero-dimensional toric stratum. The product of a momentum map $\ol
X\to C$ with the momentum map for $(\CC^*)^r$,
\[
(\CC^*)^r\lra \RR^r,\quad
(z_1,\ldots,z_r)\longmapsto \big(\log|z_1|,\ldots,\log|z_r|\big)
\]
is then a momentum map for $X$. We may therefore assume that $X$ has
a zero-dimensional toric stratum, or equivalently that $\sigma^\vee$
is strictly convex.

Now embed $X$ into a projective toric variety $\tilde X$ and let $\mu: \tilde
X\to \Xi$ be a momentum map mapping the zero-dimensional toric stratum of $X$ to
the origin, which is hence necessarily a vertex of $\Xi$. Then the
cone in $M_\RR$ spanned by $\Xi$ equals $\sigma^\vee$. By replacing $\Xi$ with
its intersection with an appropriate affine hyperplane and $\tilde X$ by the
corresponding projective toric variety, we may assume that $\Xi$ is the convex
hull of $0$ and a disjoint facet $\omega\subset\Xi$. Then
$X=\mu^{-1}(\Xi\setminus \omega)$. To construct a momentum map for $X$ with
image $\sigma^\vee$ let $q: M_\RR\to\RR$ be the quotient by the tangent space
$T_\omega\subset M_\RR$. Then $q(\Xi)$ is an interval $[0,a]$ with $a>0$. Now
$f(x)=x/(a-x)$ maps the half-open interval $[0,a)$ to $\RR_{\ge0}$. A momentum
map for $X$ with image $\sigma^\vee$ is then defined by
\[
z\longmapsto (f\circ q)\big(\mu(z)\big) \cdot \mu(z).
\]
\end{proof}

\section{Toric Degenerations from Combinatorial Data}
\label{Sec: reconstruction}
We will first recall the definition of a toric degeneration, as introduced in \cite{Gross-Siebert5}. We then describe the combinatorial data used to reconstruct such degenerations of Calabi--Yau varieties using the Gross--Siebert program \cite{Gross-Siebert}. 
\begin{definition}
\label{Def: toric degeneration}
Let $R$ be a discrete valuation ring with closed point $0$. A \emph{toric degeneration}  
\[
\pi\colon \shX\to \Spec R
\]
is a flat family whose generic fibre is a normal algebraic space and central fibre $\mathcal{X}_0$ is a union of toric varieties glued along toric boundary strata, so that around the zero dimensional strata the morphism $\pi$ is toroidal. Moreover, we assume $\shX$ polarized, and furthermore require that there exists a closed subset $Z \subset \shX$ of relative codimension at least two, not containing any toric strata of $\mathcal{X}_0$, such that every point in $\shX \setminus Z$ has a neighbourhood which is  \'etale locally isomorphic to an affine toric variety. We refer to $Z$ as the \emph{log-singular locus} on $\shX$.
\end{definition}
We will assume the central fibre of a toric degeneration is projective, which can be achieved under mild assumptions -- see \cite[Thm 2.34]{Gross-Siebert3}. We furthermore assume that it has no self-intersections, unlike in \cite{Gross-Siebert3} for simplicity. 

To a toric degeneration we can associate the data given by a tuple
\begin{equation}
\label{Eq: combinatorial data}    
    (B,\P,\varphi,s)
\end{equation}
where $B$ is an \emph{integral affine manifold with singularities}, $\P$ is a \emph{polyhedral subdivision}, $\varphi$ is a \emph{multi-valued piecewise-linear (MPL) function} and $s$ is \emph{gluing data}. We will review the definitions and discuss this data in more detail in the next subsections. Before doing this, we note that this combinatorial data is the initial data one uses to construct a toric degeneration of a Calabi--Yau using the Gross--Siebert program \cite{Gross-Siebert}. For a significant generalisation of this construction to the case of varieties with effective anti-canonical class see \cite{GHS}. 
\subsection{The Intersection Complex $B$ with a polyhedral decomposition $\P$}
\label{subsect: The intersection complex}
Let 
\[\pi \colon \shX\to \Spec R\] be a toric degeneration as in Definition \ref{Def: toric degeneration}. Since $R$ is a discrete valuation ring, the requirement that near each zero-dimensional toric stratum of $\mathcal{X}_0$, \'etale locally $\pi$ is isomorphic
to a monomial map of toric varieties amounts to describing $\X$ \'etale locally as $\Spec\CC[P]$ with $P$ a
toric monoid and $f$ by one monomial $t=z^{\rho_P}$, $\rho_P\in P$. Under
these conditions it turns out that the generic fibre $\X_\eta$ is a Calabi-Yau
variety \cite{Gross-Siebert3}.

The toric
irreducible components of $\mathcal{X}_0$ are glued pairwise along toric strata in such a
way that the \emph{dual intersection complex} is a closed topological manifold, of the
same dimension $n$ as the fibres of $\pi$. In particular, the notion of toric
strata of $\mathcal{X}_0$ makes sense. The \emph{intersection complex} is defined as follows.
\begin{definition}
\label{def: polyhedral dec on B}
Asuming that $\mathcal{X}_0$ is projective, let $\P$ be the set of the images if momentum
maps, defined as in Definition \ref{Def: Momentum map}, of the toric strata, and let $\P_{\mathrm{max}}\subset\P$ be the maximal elements under
inclusion. For a cell $\tau\in\P$ we denote by $X_\tau\subset \mathcal{X}_0$ the corresponding
toric stratum. The cell complex
\[B=\bigcup_{\sigma\in\P_{\mathrm{max}}} \sigma\] with attaching maps defined by the intersection patterns of the toric strata is called the \emph{intersection complex} 
$(B,\P)$ or \emph{cone picture} of the polarized
central fibre $\mathcal{X}_0$. We refer to the collection $\P$ of closed subsets of $B$ as a \emph{polyhedral decomposition} of $B$.
\end{definition}

We require a polyhedral decomposition $\P$ to satisfy some compatibility conditions, to be able to control how cells of $\P$ interact with the singular set of the affine
manifold $B$. These conditions are explained in detail in Definition $1.22$ in \cite{Gross-Siebert3}.

\begin{remark}
Unlike in \cite{Gross-Siebert3}, for simplicity of notation we assume that no irreducible
component of $\mathcal{X}_0$ self-intersects. On the level of the cell complex $(B,\P)$,
this means that for any $\tau\in\P$ the map $\tau\to B$ is injective. 
\end{remark}  
The barycentric subdivision of $(B,\P)$ is canonically isomorphic to the
barycentric subdivision of the dual intersection complex of $\mathcal{X}_0$, as simplicial
complexes. Thus $B$ is a topological manifold. The following is Proposition~3.1 in \cite{RS}.
\begin{proposition}
\label{prop: RS}
There is a generalized momentum
map $\mu: \mathcal{X}_0\to B$ that restricts to the momentum maps $X_\tau\to \tau$ on
each toric stratum of $\mathcal{X}_0$.
\end{proposition}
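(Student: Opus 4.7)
The plan is to construct $\mu$ from a canonical scaffold given by the positive real locus of $\mathcal{X}_0$.

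First, I will establish the scaffold. The positive real locus $(X_\sigma)_{\ge 0}$ of each toric irreducible component $X_\sigma \subset \mathcal{X}_0$ is intrinsic: it is obtained by patching together the subsets $\Hom(\sigma^\vee,\RR_{\ge 0}) \subset \Hom(\sigma^\vee,\RR)$ on the affine toric charts, as in \eqref{eq: positive real locus}. Because toric strata are glued by monomial maps and monomial maps preserve positive reals, the loci $(X_\sigma)_{\ge 0}$ patch to a canonical subset $(\mathcal{X}_0)_{\ge 0} \subset \mathcal{X}_0$. The homeomorphisms $(X_\sigma)_{\ge 0} \simeq \sigma$ from Definition \ref{Def: Momentum map}(2) then patch to a homeomorphism $(\mathcal{X}_0)_{\ge 0} \simeq B$; let $s_0\colon B \to \mathcal{X}_0$ denote its inverse. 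This $s_0$ will serve as the common section shared by all the momentum maps I build on individual strata.

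Next, I will define $\mu$ stratumwise using the torus actions. For each $\tau \in \P$ let $K_\tau := \Hom(M_\tau, U(1))$ denote the compact torus of the toric stratum $X_\tau$, and let $D_\tau \subset X_\tau$ be its toric boundary. By Definition \ref{Def: Momentum map}(3) applied to $X_\tau$ with section $s_0|_\tau$, every point of $X_\tau \setminus D_\tau$ is uniquely of the form $\lambda \cdot s_0(y)$ with $\lambda \in K_\tau$ and $y \in \Int(\tau)$, so I set $\mu(\lambda \cdot s_0(y)) := y$. Since $\mathcal{X}_0$ is set-theoretically the disjoint union of the open stratum orbits $X_\tau \setminus D_\tau$ over $\tau \in \P$, this yields a well-defined map $\mu\colon \mathcal{X}_0 \to B$, and the three conditions of Definition \ref{Def: Momentum map} hold for each $\mu|_{X_\tau}$ by construction.

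The main obstacle will be verifying global continuity of $\mu$: as a sequence $x_n \in X_\sigma \setminus D_\sigma$ converges to a point $x \in X_\omega \setminus D_\omega$ with $\omega \subsetneq \sigma$, I need $\mu(x_n) \in \Int(\sigma)$ to converge to $\mu(x) \in \Int(\omega) \subset \partial \sigma$. This is a local statement on an affine toric chart $\Spec \CC[\sigma^\vee \cap M] \subset X_\sigma$ containing $x$, where the compact torus $K_\sigma$ degenerates to $K_\omega$ along the face of $\sigma^\vee$ dual to $\omega$. The positive real scaffold encodes this degeneration, but pinning down continuity cleanly will likely require an explicit coordinate formula for $\mu$ on the chart, along the lines of Proposition \ref{Prop: Momentum map for affine toric}, rather than the abstract construction above; this is the technical heart of the original Ruddat--Siebert proof.
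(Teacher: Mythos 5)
The paper does not actually prove this statement; it quotes it as Proposition~3.1 of \cite{RS}, so your attempt has to be judged on its own merits, and as written it has two genuine gaps. First, the patching step for your scaffold is not justified. A momentum map in the sense of Definition~\ref{Def: Momentum map} is far from unique, and conditions (1)--(3) constrain only the open orbit and the positive real locus of the single toric variety in question. If you choose momentum maps $\mu_\sigma\colon X_\sigma\to\sigma$ independently for the maximal strata, the induced homeomorphisms $(X_\sigma)_{\ge0}\simeq\sigma$ and $(X_{\sigma'})_{\ge0}\simeq\sigma'$ have no reason to agree on the common face $(X_\rho)_{\ge0}$ for $\rho=\sigma\cap\sigma'$; indeed nothing in the definition even guarantees that $\mu_\sigma$ restricts to a momentum map of the boundary stratum $X_\rho$ with image the face $\rho$. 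So the claimed homeomorphism $(\mathcal{X}_0)_{\ge0}\simeq B$ does not follow. You would need either an inductive, compatible choice of momentum maps over the strata, or a canonical formula (such as the algebraic momentum map of \cite{NO}, \S1) that is natural under passage to torus-orbit closures.

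Second, and more importantly, you explicitly defer the continuity of $\mu$ at points where orbits degenerate, calling it the technical heart and saying it will ``likely require an explicit coordinate formula.'' That deferral concedes the entire content of the proposition: the set-theoretic assignment $\lambda\cdot s_0(y)\mapsto y$ orbit by orbit is essentially trivial, and without the continuity argument you also cannot verify that the glued map restricted to each $X_\tau$ genuinely satisfies conditions (1)--(3), since those are statements about a continuous map. A complete proof would have to exhibit, on an affine chart $\mathrm{Specan}\,\CC[\sigma^\vee\cap M]$ around a point of a deeper stratum, an explicit continuous torus-invariant map realizing your assignment --- for instance via the generator formula of \cite{NO} or the construction of Proposition~\ref{Prop: Momentum map for affine toric} --- and then check compatibility with the monomial gluings of the irreducible components of $\mathcal{X}_0$. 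As it stands, your proposal is an accurate plan of attack, with the right scaffold and the right identification of where the difficulty lies, but it is not a proof.
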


The interpretation of the cells of $\P$ as momentum polyhedra endows $B$ with
the structure of an \emph{integral affine manifold} on the interiors of the
maximal cells, that is, a manifold with a coordinate atlas, with transition functions in 
\[\Aff(\ZZ^n)=
\ZZ^n\rtimes \GL(n,\ZZ).\] 
On such manifolds it makes sense to talk about
integral points as the preimage of $\ZZ^n$ under any chart, and they come with a
local system $\Lambda$ of integral tangent vectors. An important insight is that
the log structure on $\mathcal{X}_0$ provides a canonical extension of this affine
structure over the complement in $B$ of the amoeba image $\shA:=\mu(Z)$
of the log singular locus $Z\subset (\mathcal{X}_0)_\sing$ under the generalized
momentum map $\mu:\mathcal{X}_0\to B$. We refer to 
\[ \mu(Z) = \shA \]
as the (amoeba image, or thickening of the) discriminant locus, which we denote by $\Delta$ in $B$. We will see in a moment that away from this locus, $B$ carries the structure of an integral affine manifold. Also to ensure that a toric log Calabu--Yau space $\mathcal{X}_0$ whose intersection complex is $B$, is smoothable, in \cite{Gross-Siebert3} there are conditions imposed on the monodromy around the discriminant locus. Namely, we require $B$ to have \emph{simple singularities}, which we review shortly -- for details see \cite[Defn 1.60]{Gross-Siebert3}. 

We locally describe the affine structure at a codimension one cell
$\rho=\sigma\cap\sigma'$ as follows. The affine structure of the adjacent maximal cells
$\sigma,\sigma'$ already agree on their common face $\rho$. So the extension at
$x\in\Int \rho\setminus\shA$ only requires the identification of
$\xi\in\Lambda_{\sigma,x}$ with $\xi'\in\Lambda_{\sigma',x}$, each complementary
to $\Lambda_{\rho,x}$. Let $X_\sigma$ and $X_{\sigma'}$ be the toric irreducible components of $\mathcal{X}_0$ corresponding to $\sigma$ and $\sigma'$ respectively. A local description around the point $x$ on the total space, is given by the equation $uv=f\cdot t^{\kappa_\rho}$. We
have $u|_{X_\sigma}=z^m$, $v|_{X_{\sigma'}}=z^{m'}$ by the assumption on $u,v$
to be monomial on one of the adjacent components $X_\sigma$, $X_{\sigma'}$.
Now since $\mu^{-1}(x)\cap Z=\emptyset$, the restriction
$f|_{\mu^{-1}(x)}$ yields a map $\mu^{-1}(x)\to \CC^*$. The homotopy class of
this map defines an integral tangent vector $m_x\in \Lambda_\rho$. One then
takes $\xi=m$, $\xi'=-m'+m_x$. See \cite{RS}, \S~2.2 for details. This defines the integral affine structure on $B\setminus\shA$, away from
codimension two cells by the following lemma.

\begin{lemma}
The integral affine structure on the interiors of the maximal cells
$\sigma\in\P$ and at points of $\Int\rho\setminus\shA$ for all
codimension one cells $\rho$ extends uniquely to $B\setminus\shA$.
\end{lemma}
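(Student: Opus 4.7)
The plan is to use the local toric model of the total space $\shX$ near the strata of codimension $\ge 2$ provided by the log structure, and to show (i) that it yields a candidate integral affine chart on a neighborhood of every point $x\in B\setminus\shA$ lying in a cell of codimension $\ge 2$, and (ii) that this candidate agrees on overlaps with the already-defined affine structure on the open dense subset
\[
U_0 \;=\; \bigcup_{\sigma\in\P_{\mathrm{max}}}\Int\sigma \;\cup\; \bigcup_{\rho \text{ codim } 1}\bigl(\Int\rho\setminus\shA\bigr).
\]
Uniqueness is automatic once existence is established: any two integral affine structures on $B\setminus\shA$ that agree on the open dense set $U_0$ must agree everywhere, since transition maps in $\Aff(\ZZ^n)$ coinciding on an open subset coincide on the whole connected component, and $B\setminus\shA$ is connected because $\shA$ has real codimension at least two.

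For existence, fix $x\in\Int\tau\setminus\shA$ with $\codim\tau\ge 2$. The hypothesis $x\notin\shA$ means $\mu^{-1}(x)\cap Z=\emptyset$, so by Definition \ref{Def: toric degeneration} there is a neighborhood of $\mu^{-1}(x)$ in $\shX$ étale locally isomorphic to an affine toric variety $\Spec\CC[P]$ over $\Spec\CC[t]$, with $t$ pulled back from the single monomial $z^{\rho_P}$, $\rho_P\in P$. The fan associated with this local toric chart, together with the generalised momentum map of Proposition \ref{prop: RS}, produces an integral affine chart on a neighborhood of $x$ in $B$. This is the proposed extension at $x$.

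The compatibility of this candidate with the affine structure already prescribed on $U_0$ must be verified face by face. On the interior of any maximal cell $\sigma$ adjacent to $\tau$, the local toric model restricts to the toric description of $X_\sigma$, so the induced affine chart is nothing but the polyhedral one from the momentum polytope. On the interior of a codimension-one face $\rho\supset\tau$ at a point $y\notin\shA$, the local toric chart $\Spec\CC[P]\to\Spec\CC[t]$ specialises to an equation of the form $uv=f\cdot t^{\kappa_\rho}$, and $f|_{\mu^{-1}(y)}\colon \mu^{-1}(y)\to\CC^{\times}$ recovers precisely the monodromy tangent vector $m_x\in\Lambda_\rho$ entering the identification $\xi'=-m'+m_x$ recalled just before the lemma. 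Thus the two affine structures coincide on the overlap, and the candidate charts at different points $x$ are mutually compatible because their pairwise overlaps meet $U_0$.

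The main technical obstacle is this last compatibility: one has to check that the local toric model at a stratum $\tau$ of codimension $\ge 2$ is intrinsic (independent of the étale neighborhood chosen) and that the monodromy it encodes is compatible with the codimension-one rule $\xi'=-m'+m_x$ across every codimension-one face containing $\tau$. This is the part that requires genuine log-geometric bookkeeping, and is carried out in detail in \cite[\S 2.2]{RS}; the rest of the argument is a formal extension-by-density statement.
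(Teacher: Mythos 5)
Your overall strategy (local toric models plus momentum maps for existence, density for uniqueness) is the same as the paper's, and your uniqueness argument is fine. But there is a genuine gap in the existence step. You assert that for an arbitrary $x\in\Int\tau\setminus\shA$ with $\codim\tau\ge 2$, Definition \ref{Def: toric degeneration} supplies a neighbourhood of the whole fibre $\mu^{-1}(x)$ that is \'etale locally $\Spec\CC[P]$ over $\Spec\CC[t]$ with $t$ given by a single monomial $z^{\rho_P}$. The definition does not say this: the toroidal/monomial condition on $\pi$ is imposed only near the \emph{zero-dimensional} toric strata, and away from $Z$ one only gets a pointwise toric chart for $\shX$ with no a priori control on $t$. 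This is exactly the point the paper is careful about: it first constructs the extension at a vertex $v$, where the hypothesis genuinely gives $P=K\cap\ZZ^{n+1}$, $t=z^{\rho_P}$ with $\rho_P\in\Int K$, so that $\partial K/\RR\cdot\rho_P$ is a \emph{complete} fan $\Sigma_v$ and $R^1(\mu_v)_*\underline{\ZZ}$ identifies the tangent sheaf across all cells meeting $v$. It then observes that this already extends the structure over every connected component of $\tau\setminus\shA$ containing a vertex, and that for components of $\tau\setminus\shA$ \emph{not} containing a vertex one must separately establish the existence of a toric model whose fan consists of not necessarily strictly convex cones --- an additional argument, not a citation of the definition. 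Your proof skips this dichotomy entirely, so the key input you invoke at a general point of a high-codimension cell is unjustified as stated.

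Two smaller omissions: you do not explain why the local toric model actually produces an integral affine chart compatible with the polyhedral charts (the paper's mechanism is the canonical identification of $R^1(\mu_v)_*\underline{\ZZ}$ on the interiors of the maximal cones of $\Sigma_v$ with $\Lambda$, together with its triviality coming from $\ZZ^{n+1}/\ZZ\cdot\rho_P$); and you do not address the possible translational ambiguity of the gluing --- extending the local system $\Lambda$ is weaker than extending the affine structure, and the paper explicitly notes that no translational part arises from the gluing along lower-dimensional cells. Deferring the codimension-one compatibility to \cite{RS} is consistent with what the paper itself does, so that part is acceptable.
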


\begin{proof}
Uniqueness is clear because the extension is already given on an
open and dense subset.

At a vertex $v\in B$ we have $\mu^{-1}(v) = X_v$, a zero-dimensional toric
stratum. Let $U\to \mathrm{Specan}\CC[P]$ with $P=K\cap \ZZ^{n+1}$ and $t=z^{\rho_P}$,
$\rho_P\in P$, be a toric chart for $\pi:\X\to \mathrm{Specan} R$ at $X_v$. Here $K$
is an $(n+1)$-dimensional rational polyhedral cone, not denoted $\sigma^\vee$ to
avoid confusion with the cells of $B$. There is then a local identification of
$\mu$ with the composition
\[
\mu_v: \mathrm{Specan}\CC[P]\stackrel{\mu_P}{\lra} K \lra
\RR^{n+1}/\RR\cdot \rho_P
\]
of the momentum map for $\mathrm{Specan}\CC[P]$ with the projection from the cone $K$
along the line through $\rho_P$. Since $\rho_P\in \Int K$, this map projects
$\partial K$ to a complete fan $\Sigma_v$ in $\RR^{n+1}/\RR\cdot \rho_P$. The
irreducible components of $\mathcal{X}_0$ containing $X_v$ have affine toric charts given
by the facets of $K$. Thus this fan describes $\mathcal{X}_0$ at $X_v$ as a gluing of
affine toric varieties. Now any momentum map $\mu$ of a toric variety provides
an integral affine structure on the image with $R^1\mu_*\ul\ZZ$ the sheaf of
integral tangent vectors on the interior. In the present case, this argument
shows first that the restriction of $R^1{\mu_v}_*\ul\ZZ$ to the interior of each
maximal cone $K'\in \Sigma_v$ can be canonically identified with the sheaf of
integral tangent vectors $\Lambda$ on the interiors of maximal cells of $B$.
Second, the argument shows that $R^1{\mu_v}_*\ul\ZZ$ restricted to $\Int K'$ can
be identified with the (trivial) local system coming from the integral affine
structure provided by $\ZZ^{n+1}/\ZZ\cdot\rho_P$. The fan thus provides an
extension of the sheaf $\Lambda$ over a neighbourhood of $v$ and hence
also of the integral affine structure. A possible translational part in the
local monodromy does not arise by the given gluing along lower dimensional
cells.

For any $\tau\in\P$, the extension at the vertices of $\P$ provides also the
extension on any connected component of $\tau\setminus\shA$ containing a vertex.
If $\shA\cap\tau$ has connected components not containing a vertex, one can in
any case show the existence of a toric model with fan $\partial K/\RR\cdot
\rho_P$ of not necessarily strictly convex rational polyhedral cones. The
argument given at a vertex then works analogously.
\end{proof}

We require the affine structure intersection complex $B$ to have  \emph{simple singularities}. This notion of simplicity has been introduced in \cite{Gross-Siebert3} as an indecomposability condition on the local affine monodromy around the singular
locus $\Delta\subset B$ of the affine structure on the dual intersection complex
of $(\mathcal{X}_0,\M_{\mathcal{X}_0})$. We discuss this in more detail in the remaining part of this section.

We first review how to encode the monodromy around the discriminant locus in $B$, before describing simplicity. Let $\omega \in \P^{}[1]$, be the set of one-dimensional
cells of the polyhedral decomposition $P$, as defined in Definition \ref{def: polyhedral dec on B}, and let $\rho \in \P^{[n-1]}$ denote the set of codimension one cells of $\P$. Note that there are two maximal cells $\sigma_\pm$ containing $\rho$, and that there are two vertices $v_\pm$ adjacent to $\omega$. Consider a loop in $B$ with base point $v_-$ and tracing around $\Delta$, by going through $\sigma_-$ followed by $v_+$ and $\sigma_+$, and coming back to $v_-$. We obtain an associated monodromy transformation \begin{equation}
\label{eq: monodromy}
T_{\omega_\rho}(m)=m+ \kappa_{\omega_\rho} \langle \check{d}_\rho, m \rangle d_\omega
\end{equation}
as shown in\cite[\S $1.5$]{Gross-Siebert3}, where $d_\omega \in \Lambda_v$ and $\check{d}_\omega \in \Lambda^*_v$ are the primitive integral vectors pointing from $v_-$ to $v_+$
%and larger on σ+ than σ− 
respectively, and $\kappa_{\omega_\rho} \in \ZZ$ is a constant independent of the choices of $v_\pm$ and $\sigma_\pm$. 
\begin{definition}
\label{def: positive}
We say $(B,\P)$ is \emph{positive} if $\kappa_{\omega_\rho} \geq 0$ for all pairs $\omega \subset \rho$ in \eqref{eq: monodromy}.
\end{definition}

More generally, consider two arbitrary vertices $v, v'$ contained in an $(n-1)$-cell $\rho$. Then, one finds the monodromy transformation takes the form
\[ m \mapsto m + \langle m, \check{d}_\rho \rangle m_{vv'}^\rho  \]
for a well-defined $m_{vv'}^\rho \in \Lambda_\rho$. In the positive case, one can assemble this data as the
monodromy polytope for $\rho$,
\[ \Delta(\rho) = \mathrm{conv}\{m_{vv'}^\rho ~ | ~ v' \in \rho\}.\]
where $v \in \rho$ is a fixed vertex, and a different choice of $v$ leads to a translation of $\Delta(\rho)$. Analogously, for a $1$-cell $\omega$ contained in two arbitrary maximal cells $\sigma,\sigma'$, one obtains
monodromy of the form
\[ m \mapsto m + \langle m, n_\omega^{\sigma\sigma'}  \rangle d_\omega  \]
for some $n_\omega^{\sigma\sigma'} \in \Lambda_\omega^\perp \subset
 \breve{\Lambda}_x$ for any $x \in \Int(\omega) \setminus \Delta$. We can then define
\[ \check{\Delta}(\omega) = \mathrm{conv}\{n_\omega^{\sigma\sigma'} ~ | ~ \omega \in \sigma'\}.\]
This is again well-defined up to translation. For an arbitrary $\tau \in \P$ with $1 \leq \dim \tau \leq \dim B - 1,~ \mathrm{for} ~ \omega \subset \tau$ with $\dim \omega = 1$ and $\tau \subset \rho$ with $\dim \rho = \dim B - 1$, define
\begin{eqnarray}
\nonumber
\Delta_\rho(\tau) & = & \mathrm{conv} \{  m_{vv'}^\rho~|~v' \in \tau \} \\
\nonumber
\breve{\Delta}_\rho(\tau) & = & \mathrm{conv} \{  n_\omega^{\sigma\sigma'}~|~ \tau \subset \sigma' \in \P^{max} \}
\end{eqnarray}
where $v, \sigma$ are fixed with $v \in \tau$ , $\tau \subset \sigma$. Now, we are ready to define the notion of simplicity of $(B,\P)$ \cite[Defn $1.60$]{Gross-Siebert3}.

\begin{definition}
\label{def: simple}
Suppose $(B,\P)$ is positive, as defined in Definition \ref{def: positive}. Then we say $(B,\P)$ is \emph{simple} if for every $\tau \in \P$ with $1 \leq \dim \tau \leq n - 1$, where $n = \dim B$, the following condition holds: Set
\begin{eqnarray}
\nonumber
P_1(\tau) & := & \{\omega \subset \tau ~ | ~ \dim \omega = 1\}, \\
\nonumber
P_{n-1}(\tau) & := & \{\tau \subset \rho | \dim \rho = n - 1\}.
\nonumber
\end{eqnarray}
Then there exists disjoint subsets
\begin{eqnarray}
\nonumber
\Omega_1,\ldots, \Omega_p & \subseteq & P_1(\tau), \\
\nonumber
R_1, \ldots, R_p & \subseteq & P_{n-1}(\tau )
\end{eqnarray}
for some $p \geq 0$ such that
\begin{itemize}
\item[(1)] For $\omega \in P_1(\tau)$ and $\rho \in P_{n-1}(\tau ),~ \kappa_{\omega_\rho} = 0$, where $\kappa_{\omega_\rho}$ is defined as in \eqref{eq: monodromy}, unless $\omega \in \Omega_i,~ \rho \in R_i$ for some $i$.
\item[(2)] For each $1 \leq i \leq p$, $\check{\Delta}_\omega(\tau)$ is independent (up to translation) of $\omega \in \Omega_i$, and we denote $\check{\Delta}_\omega(\tau)$ by $\check{\Delta}_i$ for $ \omega \in \Omega_i$; similarly, $\Delta_\rho (\tau)$ is indepedent (up to
translation) of $\rho \in R_i$, and we denote $\Delta_\rho (\tau)$ by $\Delta_i$.
\item[(3)] Let $e_1, \ldots, e_p$ be the standard basis of $\ZZ^p$, and set local monodromy polytopes
\begin{eqnarray}
\nonumber
\check{\Delta}(\tau) & := & \mathrm{conv} \bigg( \bigcup_{i=1}^p \check{\Delta}_i \times \{e_i \} \bigg) \subseteq (\Delta_\tau^\perp \oplus \ZZ^p) \otimes \RR  \\
\nonumber
\Delta(\tau) & := & \mathrm{conv} \bigg( \bigcup_{i=1}^p \Delta_i \times \{e_i \} \bigg) \subseteq (\Delta_\tau^\perp \oplus \ZZ^p) \otimes \RR. 
\end{eqnarray}
Then we further require $\check{\Delta}(\tau)$ and $\Delta(\tau)$ to be elementary simplices, that is,
simplices whose only integral points are their vertices.
\end{itemize}
\end{definition}
Note that simplicity implies local rigidity of the singular locus of the log
structure as needed in the smoothing algorithm \cite[Defn 1.26]{Gross-Siebert}, but unlike local rigidity, being simple imposes conditions in
all codimensions. In the case of simple singularities, we can determine the log singular locus in the central fibre uniquely, by the following Corollary~5.8 of \cite{Gross-Siebert3}.
\begin{proposition}
If $B$ is an integral affine manifold with simple singularities, then the log singular locus $Z\subset \mathcal{X}_0$ on $\mathcal{X}_0$ with associated intersection complex $B$, is determined uniquely.
\end{proposition}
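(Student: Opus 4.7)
The plan is to pin down $Z$ stratum-by-stratum along the toric strata of $\mathcal{X}_0$, using simplicity to rigidify the local combinatorial data controlling the log structure.

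First I would recall that by construction $\mu(Z)=\Delta$, so $Z\subset\mu^{-1}(\Delta)$, and over the complement $B\setminus\Delta$ the extension lemma just proved provides a canonical integral affine structure; correspondingly the log structure on $\mathcal{X}_0$ there coincides \'etale-locally with the divisorial log structure of a toric variety, so there is no ambiguity in $Z$ away from $\mu^{-1}(\Delta)$. It then suffices to determine $Z\cap X_\tau$ uniquely for each cell $\tau\in\P$ with $1\le\dim\tau\le n-1$ whose relative interior meets $\Delta$.

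Next I would work in a toric chart at $X_\tau$: by the toric-model argument used in the proof of the extension lemma, the local model of $\mathcal{X}$ at $X_\tau$ is $\Spec\CC[P]$ with $t=z^{\rho_P}$, and the log structure of $\mathcal{X}_0$ in this chart is parametrised by slab-type data on the cells $\omega\in P_1(\tau)$ and $\rho\in P_{n-1}(\tau)$ that enter Definition \ref{def: simple}. By the log-smoothness criterion, $Z\cap X_\tau$ is the common vanishing locus of these slab functions on $X_\tau$. Simplicity asserts that the monodromy polytopes $\check\Delta(\tau)$ and $\Delta(\tau)$ are elementary simplices, whose only integral points are their vertices; translated into the slab functions, this forces each such function to be, up to a non-vanishing factor, a single monomial on the appropriate toric stratum of $X_\tau$. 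The common zero locus of finitely many monomials on a toric variety is a reduced union of toric substrata, hence is determined purely by the indexing data $\Omega_i, R_i, \Delta_i, \check\Delta_i$ attached to $B$.

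The hard part is making precise that the elementary-simplex hypothesis really eliminates all deformation freedom in the slab functions: one must check that the only Newton-polytope contributions allowed by the local affine monodromy around $\tau$ are the vertices of $\Delta_i$ and $\check\Delta_i$, so the slab functions are pinned down up to invertible factors on $X_\tau$ and their common vanishing locus does not depend on any further choice. Once this local rigidity is established, the local descriptions of $Z\cap X_\tau$ glue across intersections of charts automatically, since the monodromy data on a face of $\tau$ is the restriction of that on $\tau$, yielding a globally unique $Z\subset \mathcal{X}_0$ as claimed.
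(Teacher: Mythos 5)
First, a point of reference: the paper does not actually prove this proposition; it is quoted as Corollary~5.8 of \cite{Gross-Siebert3}, and the substance of the argument lives in the classification of log Calabi--Yau structures via sections of the sheaf of ``slab functions'' carried out in \S\S3.3 and 5 of that paper. So your attempt has to be judged against that argument.

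Your proof contains a genuine error at the decisive step. You claim that the elementary-simplex condition on $\Delta(\tau)$ and $\check\Delta(\tau)$ forces each slab function to be, up to a unit, a single monomial, and you conclude that $Z$ is a reduced union of toric substrata. Both claims are false, and the second directly contradicts the paper's own setup: Definition \ref{Def: toric degeneration} requires $Z$ to contain \emph{no} toric stratum, and in the quartic K3 example of \S\ref{sec: example} the locus $Z$ consists of $24$ points such as $\left[\tfrac{3+\sqrt5}{2}:1:0:0\right]$ lying in the open torus orbits of the double curves $\PP^1$, not at torus-fixed points. What the elementary-simplex hypothesis actually gives is that the Newton polytope of a (normalized) slab function $f_\rho$ is an elementary simplex, so $f_\rho=\sum_i c_i z^{m_i}$ with the sum running only over the \emph{vertices} $m_i$ of $\Delta(\rho)$; this is a sum of several monomials, not one, and its zero locus visibly depends on the coefficients $c_i\in\CC^{\times}$ (already $c_0+c_1x$ has zero locus $\{x=-c_0/c_1\}$). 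Your ``hard part'' paragraph correctly identifies that the Newton polytope is pinned to the monodromy polytope, but the leap from ``monomials only at the vertices'' to ``pinned down up to invertible factors'' is exactly the gap: it is not true stratum by stratum.

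The actual mechanism in \cite{Gross-Siebert3} is cohomological rather than local. One shows that the residual freedom in the vertex coefficients $c_i$ is precisely absorbed by the normalization condition on sections of $\mathcal{LS}^+_{\mathrm{pre}}$ together with the action of changes of open gluing data (equivalently, torus reparametrizations of the strata $X_\rho$), so that any two log Calabi--Yau structures on $\mathcal{X}_0$ with intersection complex $B$ are related by an automorphism identifying their singular loci; it is only after this global bookkeeping that $Z\subset\mathcal{X}_0$ is well defined. A purely local argument on each $X_\tau$, as you propose, cannot work, because locally the zero locus genuinely moves with the coefficients.
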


\subsection{The MPL-function $\varphi$}
\label{Subsect: The MPL function}
We require an additional combinatorial piece of data, given by a multi-valued piecewise linear (MPL) function $\varphi$ defined on $B_0 := B \setminus \Delta$. This data is viewed as specified data, in relation with a choice of polarization in \cite{Gross-Siebert}, whereas in \cite{GHS} it canonically occurs in the presented mirror construction. For a more comprehensive study of the appearence of the (MPL) function see \cite[\S 1]{GHS} (for our purposes we assume the MPL functions in this paper take values in $Q=\NN$). Note that the maximum domains of linearity of $\varphi$ correspond to maximal cells of the polyhedral decomposition $\P$ on $B$. Moreover, this function is uniquely described by one
integer $\kappa_\rho$ on each codimension one cell $\rho\in\P$. If $uv=f\cdot
t^\kappa$ is the local description of $\X$ at a general point of $X_\rho$,
then $\kappa_\rho=\kappa$. Each codimension two cell $\tau$ imposes a linear
condition on the $\kappa_\rho$ for all codimension one cells
$\rho\supset\tau$ assuring the existence of a local single-valued representative
of $\varphi$ in a neighbourhood of $\tau$ (see \cite{GHS}, Example~1.11). Note
that the local representative $\varphi$ is only defined up to a linear function.
Therefore, globally $\varphi$ can be viewed as a \emph{multi-valued piecewise linear
function}, a section of the sheaf of pieceweise linear functions modulo linear
functions. 

\subsection{Lifted Open Gluing Data $s$}
\label{subsec: Open Gluing Data}
To reconstruct a toric degeneration out of the combinatorial data in \cite{Gross-Siebert}, naively one defines a monoid ring associated to each vertex $v \in \P$, by consideration of integral points on the upper convex hull of $\varphi$. An affine cover for the total space of the toric degeneration is then obtained by the spectra of these monoid rings. The technical difficulty arises while determining how to glue the affine open sets forming this cover. However, this gluing is not entirely canonical, and depends on the a choice of \emph{gluing data}. Roughly speaking, gluing data is the data determining how the big torus orbits on the toric irreducible components of the central fibre are assembled together. This assembling is determined by \emph{closed gluing data} -- see \cite{Gross-Siebert3}, Definition~2.3
and Definition~2.10. While closed gluing
data can be interpreted as changing the closed embeddings defined by the inclusion of toric strata on the central fibre of a toric degeneration, we also have a modification of it given by \emph{open gluing
data} \cite{Gross-Siebert,Gross-Siebert3}, where ``Open'' refers
to the fact that these gluing data modify open embeddings. 
We note that the reconstruction of a toric degeneration can be carried either on the \emph{cone side} (the intersection complex) or on the \emph{fan side} (the dual intersection complex) \cite[\S2]{Gross-Siebert3}. For the following discussion in the remaining part of this section we assume we are in the latter case, and define the open gluing data for the fan picture as in \cite[\S2]{Gross-Siebert3}. For the analogous construction of open gluing data on the fan side we refer to \cite[Defn 1.18]{Gross-Siebert}. 

\begin{definition}
\label{Def: PM}
Let $\sigma \in \mathcal{P}_{\mathrm{max}}$ be a maximal cell, and let $\tau \subseteq \sigma$. Define $\breve{\mathrm{PM}}(\tau)$ to be the set of tuples $(s_v)_{v\in \tau}$ where $s_v \in \Lambda_{\sigma} \otimes_{\ZZ} \CC^{\star}$, and the following condition is satisfied: For any face $\omega \subseteq \tau$ and vertices $v, w \in \omega, s_v \cong s_w ~\mathrm{mod}~ \Lambda_{\omega} \otimes_{\ZZ} \CC^{\star}$ , where $\Lambda_{\omega}$ is
viewed as a subspace of $\Lambda_{\sigma}$ via parallel transport along a path from $\omega$ into $\sigma$.
\end{definition}
The following is Definition $2.25$ in \cite{Gross-Siebert3}, where the notation $e\in\Hom(\tau,\sigma)$ denotes the inclusion $e\colon \tau \to \sigma$ of the faces $\tau,\sigma \in \P$. 
\begin{definition}
\label{open_gluing_data}
\emph{Open gluing data} for $\P$ are data
$s=(s_e)_{e\in\coprod\Hom(\tau,\sigma)}$, with
$s_e\in \breve{\mathrm{PM}}(\tau)$ for $e\colon \tau \to \sigma$. They must satisfy
\begin{enumerate}
\item $s_{\id_{\tau}} = 1$ for every $\tau \in \P$.
\item If $e \in \Hom(\tau, \tau')$, $f \in \Hom(\tau', \tau'')$ then $s_{f\circ e}  = s_f \cdot s_e$ wherever defined:
\[s_{f\circ e,\sigma} = s_{f,\sigma} \cdot s_{e,\sigma} ~ \mathrm{for ~ all}~ \sigma \in \P_{\mathrm{max}}  ~\mathrm{with} ~ \sigma \supseteq \tau''.\]
\end{enumerate}
\end{definition}
We note that Definition \ref{open_gluing_data} does not depend on the choice of the maximal $\sigma$, since for any $\sigma, \sigma'\supseteq \tau$  , we identify $(s_v)_{v\in \tau}$ in the description of $\breve{\mathrm{PM}}(\tau)$ in Definition \ref{Def: PM} with the choice $\sigma$ with $(s'_v)_{v\in \tau}$ in the description of $\breve{\mathrm{PM}}(\tau)$ with the choice $\sigma'$, by taking $s'_v$
to be the parallel transport of $s_v$ via a path from the
interior of $\sigma$ through $v$ to the interior of $\sigma'$. In the case when $B$ is positive and simple, we can recover open gluing data from lifted gluing data, defined as follows.
%Rephrasing results in \cite[\S $4$]{Gross-Siebert3}, we conclude that open gluing data parametrizes log structures on the central fibre of a toric degeneration. Moreover, open gluing data can be recovered from \emph{lifted gluing data} \cite[Def 5.1]{Gross-Siebert3}.
\begin{definition}
\label{def: lifted gluing}
Define for $\tau \in \P$ the open subset $W_\tau \subseteq B$ to be the union of interiors of all
cells of the barycentric subdivision $\mathrm{Bar}(\P)$ intersecting $\Int (\tau )$, i.e., having the barycenter of $\tau$ as a vertex. For $e : \omega \to \tau$ write $W_e := W_\omega \cap W_\tau$. Letting $\iota : B_0=B \setminus \Delta \hookrightarrow B$ be the inclusion, we define \emph{lifted gluing data} to be a $\check{C}$ech $1$-cocycle $(s_e)$ for the open cover $W := \{ W_\tau ~ | ~ \tau \in \P \}$ of $B$ and the sheaf $\iota_* \Lambda \otimes \mathbb{C}^*$.
\end{definition}
Not all open gluing data arises in this way from lifted gluing data -- see \cite[Prop 4.25]{Gross-Siebert3}. However, if it does, it
arises from unique lifted gluing data \cite[Thm 5.2]{Gross-Siebert3}. Thus we view the set of lifted gluing data as a subset of the set of open gluing data. The following result is \cite[Thm 5.4]{Gross-Siebert3}. %\footnote{The
%theorem makes also the converse statement using the dual
%intersection complexes; working polarized as we do here, imposes a
%codimension one constraint, see the definition of $A_\PP$ in
%\cite{GHS}, \S{A.2}.}
\begin{theorem}
\label{thm: lifted gluing}
In the case of positive and simple singularities, a polarized toric
log Calabi-Yau space with given dual intersection complex
$(B,\P)$ along with a specified MPL function $\varphi$, is defined uniquely up to isomorphism by
lifted gluing data $s\in H^1 (B,\iota_*\Lambda
\otimes\CC^\times)$. 
\end{theorem}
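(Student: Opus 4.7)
The plan is to construct a bijection between $H^1(B,\iota_*\Lambda\otimes\CC^\times)$ and isomorphism classes of polarized toric log Calabi--Yau spaces with the prescribed combinatorial data $(B,\P,\varphi)$. The construction direction is local-to-global patching; the uniqueness and existence directions will lean on the rigidity provided by the simplicity hypothesis.

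Given a $\check{C}$ech $1$-cocycle $s=(s_e)$ on the cover $\{W_\tau\}_{\tau\in\P}$ with values in $\iota_*\Lambda\otimes\CC^\times$, I would build a toric log Calabi--Yau space as follows. The data $(B,\P,\varphi)$ canonically determine, in a neighbourhood of each stratum $X_\tau$, a local affine toric model $U_\tau^\dagger$ with divisorial log structure (as in Proposition \ref{toric fine}), together with canonical open immersions $U_\omega^\dagger\hookrightarrow U_\tau^\dagger$ for each $e\colon\omega\to\tau$. Each section $s_e\in\Gamma(W_e,\iota_*\Lambda\otimes\CC^\times)$, viewed via Definition \ref{Def: PM} as an element of $\breve{\mathrm{PM}}(\omega)$, acts as a toric automorphism twisting the canonical embedding. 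The cocycle identity $s_{f\circ e}=s_f\cdot s_e$ is precisely the condition that the twisted embeddings agree on triple overlaps, so the $U_\tau^\dagger$ glue to a scheme $\mathcal{X}_0^\dagger(s)$ with a global log structure and generalised momentum map to $B$ realising $(B,\P,\varphi)$.

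To show that the assignment $s \mapsto \mathcal{X}_0^\dagger(s)$ factors through cohomology classes, I would verify that any $0$-cochain $t=(t_\tau)$ with $t_\tau\in\Gamma(W_\tau,\iota_*\Lambda\otimes\CC^\times)$ yields a toric automorphism of each $U_\tau^\dagger$ that is compatible with the twisted gluings for $s$ and $s\cdot\delta t$, producing an isomorphism $\mathcal{X}_0^\dagger(s)\simeq \mathcal{X}_0^\dagger(s\cdot\delta t)$; conversely, any isomorphism fixing $(B,\P,\varphi)$ restricts on a sufficiently fine refinement of $\{W_\tau\}$ to such a family of toric automorphisms, so its discrepancy from the identity produces the required coboundary. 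To prove surjectivity on isomorphism classes, I would start with an arbitrary such $\mathcal{X}_0^\dagger$ and compare its open embeddings of strata with the canonical ones provided by the models $U_\tau^\dagger$, extracting open gluing data in the sense of Definition \ref{open_gluing_data}.

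The main obstacle is the final step: showing that these open gluing data in fact arise from a $\check{C}$ech cocycle with values in $\iota_*\Lambda\otimes\CC^\times$, i.e.\ from lifted gluing data. This is where positivity and simplicity enter decisively, since it is precisely the elementary simplicity of the local monodromy polytopes $\Delta(\tau)$ and $\check{\Delta}(\tau)$ from Definition \ref{def: simple} that rules out the monodromy contributions around $\Delta$ which would otherwise obstruct the lift from open to lifted gluing data. Uniqueness of the lift then gives the injectivity remarked after Definition \ref{def: lifted gluing}, completing the identification of isomorphism classes with $H^1(B,\iota_*\Lambda\otimes\CC^\times)$.
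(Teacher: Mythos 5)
The paper itself offers no proof of this statement: it is quoted verbatim from Gross--Siebert \cite[Thm 5.4]{Gross-Siebert3}, so the comparison has to be with the argument given there. Measured against that argument, your outline captures the scheme-theoretic half of the story reasonably well -- building $\mathcal{X}_0(s)$ from local toric models twisted by $s_e\in\breve{\mathrm{PM}}(\omega)$, checking that cohomologous cocycles give isomorphic spaces, and extracting open gluing data from an arbitrary $\mathcal{X}_0$ -- but it omits the half where essentially all of the work and the simplicity hypothesis actually live, namely the log structure.

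A toric log Calabi--Yau space is not a glued scheme; it is a glued scheme together with a log structure making it log smooth over the standard log point away from a codimension-two locus $Z$. Your proposal treats the log structure as canonically induced by the divisorial log structures on the local models $U_\tau^\dagger$, with only the gluing varying. That is not the case: as Proposition \ref{splitting} already indicates, the restriction of $\M_{\mathcal{X}_0}$ to a stratum is a (non-canonically split) extension of the divisorial log structure, and on a fixed glued scheme $\mathcal{X}_0(s)$ the set of admissible log structures of the required type is itself a nontrivial moduli -- in Gross--Siebert it is the set of sections of a sheaf $\mathcal{LS}^+_{\mathrm{pre}}$ supported on the singular locus of $\mathcal{X}_0$, and the zero locus of such a section is what determines $Z$. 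The role of simplicity is not, as you suggest, to lift open gluing data to a \v{C}ech cocycle (that lift, when it exists, is unique by \cite[Thm 5.2]{Gross-Siebert3}, but existence can genuinely fail, cf.\ \cite[Prop 4.25]{Gross-Siebert3}); rather, simplicity rigidifies the sections of $\mathcal{LS}^+_{\mathrm{pre}}$ near $Z$ so that the combined freedom in (open gluing data, choice of section) is exactly accounted for by $H^1(B,\iota_*\Lambda\otimes\CC^\times)$ -- the pushforward $\iota_*$ from $B\setminus\Delta$ is precisely what encodes the behaviour of the log structure around the discriminant. Without this ingredient your map $s\mapsto\mathcal{X}_0^\dagger(s)$ is not defined as a map to log spaces, and the surjectivity argument by ``comparing open embeddings of strata'' cannot see the log-structure moduli at all, so the proposed bijection is not established.
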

Theorem \ref{thm: lifted gluing}, can also be formulated on the cone side, following \cite{Gross-Siebert}. This would amount to replacing the dual intersection complex with the intersection complex, and choosing gluing data in
\[  s\in H^1 (B,\iota_*\breve{\Lambda}
\otimes\CC^\times) \]
For a comprehensive discussion on how to define the log structure on a toric log Calabi--Yau from lifted open gluing data, see \cite[\S4]{Gross-Siebert3}.

\section{The Kato--Nakayama Space of a Log Space}
\label{par: The Kato Nakayama Space}
 To any log analytic space $(X,\M_X)$ we can functorially assign a topological space, defined by Kato and Nakayama, called the Kato-Nakayama space, denoted by $X^{KN}$, or by $X_{log}$ as in \cite{KN}. In this section we review the definition of Kato--Nakayama spaces, along with some examples. 
\begin{definition}
\label{def:KN}
Let $(X,\M_X)$ be a log analytic space. Then, the \emph{Kato-Nakayama space of $X$}, denoted by $X^{KN}$ is the topological space defined by
$$X^{KN}:=  \Hom(\Pi^{\dagger} \to X^{\dagger}) $$ where $\Pi^{\dagger}$ denotes the polar log point $\Spec\CC$ with the associated log structure $\shM_{\Pi}:=\RR_{\geq0} \times S^1$ and the map $\alpha_{\Pi}:S^1\times \mathbb{R}_{\geq0}\rightarrow \mathbb{C}$ given by $(e^{i\phi},r)\rightarrow re^{i\phi}$. 
\end{definition}
Expanding Definition \ref{def:KN}, as explained in \cite{AS} we obtain
$$X^{KN}=\{(x,\phi)|~~x\in X,~~ \phi \in \Hom(\M^{gp}_{X,x},S^1) , \phi (h)=\frac{h(x)}{|h(x)|} ~~ \mathrm{for~~ any~~} h \in \mathcal{O}_{X,x}^{\times}\}$$.

\begin{proposition}
\label{topology ok KN}
Let $(X,\shM_X)$ be a log analytic space. Assume $\shM_X$ is fine, and let $\beta: \shP \to \shM_X$ be a chart. Then the Kato-Nakayama space $X^{KN}$ is embedded into $X \times \Hom(\shP^{gp},S^1)$ as a closed subset.
\end{proposition}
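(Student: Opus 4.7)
The plan is to exhibit an explicit continuous map $\Phi: X^{KN} \to X \times \Hom(\shP^{gp}, S^1)$, prove it is injective, identify its image as the common zero locus of a family of continuous functions, and invoke the definition of the topology on $X^{KN}$ to conclude that $\Phi$ is a closed embedding.

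First I will define $\Phi$ using the given chart. The chart $\beta: \shP \to \shM_X$ induces, on stalks at each $x \in X$, a homomorphism $\beta_x^{gp}: \shP^{gp} \to \shM^{gp}_{X,x}$. Set $\Phi(x,\phi) = (x, \phi \circ \beta_x^{gp})$. Continuity is immediate from the explicit pointwise description of $X^{KN}$ recalled just before the statement, since the functions $(x,\phi)\mapsto \phi(\beta(p))$ for $p\in\shP$ are among the structure maps built into the topology of $X^{KN}$.

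Second, I will check injectivity. Being a chart means the associated log structure $\shP^a$ is isomorphic to $\shM_X$, so each stalk $\shM^{gp}_{X,x}$ is generated as a group by the image of $\beta_x^{gp}$ together with $\mathcal{O}_{X,x}^{\times}$. Since the defining conditions on $X^{KN}$ force $\phi(h) = h(x)/|h(x)|$ for all units $h \in \mathcal{O}_{X,x}^{\times}$, a homomorphism $\phi:\shM^{gp}_{X,x}\to S^1$ arising from a point of $X^{KN}$ is entirely determined by its restriction along $\beta_x^{gp}$ together with the base point $x$. Hence two points of $X^{KN}$ with the same image under $\Phi$ must coincide.

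Third, I will identify the image. A pair $(x,\psi) \in X \times \Hom(\shP^{gp}, S^1)$ lies in the image exactly when the data of $\psi$ on $\shP^{gp}$ together with the canonical assignment $h \mapsto h(x)/|h(x)|$ on $\mathcal{O}_{X,x}^{\times}$ descend to a homomorphism $\shM^{gp}_{X,x}\to S^1$. Unwinding the presentation of $\shP^a$ given in Definition~\ref{Log pre log}, this compatibility reduces to the single requirement
\[
\beta(p)(x) \;=\; |\beta(p)(x)|\cdot \psi(p)
\]
for each $p$ in a finite generating set of $\shP$. Both sides are continuous in $(x,\psi)$; the equation is automatic when $\beta(p)(x)=0$ and enforces the correct phase matching when $\beta(p)(x)\neq 0$. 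Consequently the image of $\Phi$ is cut out by finitely many continuous $\CC$-valued equations, hence is closed in $X\times \Hom(\shP^{gp},S^1)$.

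The main obstacle will be the bookkeeping for injectivity and for the image description: one must carefully use the presentation $\shM_X = \bigl(\shP \oplus \mathcal{O}_X^{\times}\bigr)/{\sim}$ of Definition~\ref{Log pre log} and verify its compatibility with groupification on stalks. Once this is in place, the assertion that $\Phi$ is a topological embedding, and not merely a continuous injection with closed image, follows from the standard fact that the topology on $X^{KN}$ is defined to be precisely the subspace topology determined by chart embeddings of exactly this form.
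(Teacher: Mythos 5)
Your proposal is correct and follows essentially the same route as the paper: both define the map via $(x,\phi)\mapsto(x,\phi\circ\beta^{gp})$, identify the image as the closed locus where $(\alpha\circ\beta(p))(x)=\psi(p)\cdot|(\alpha\circ\beta(p))(x)|$, and recover the unique preimage from the presentation of the associated log structure (the paper phrases this via the universal property of the pushout defining $\shP^a$, you via the explicit quotient of Definition~\ref{Log pre log} --- the same argument).
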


\begin{proof}
%The Grothendieck group $\shP^{gp}$ is obtained by adding to $\shP $ the inverse of each element inside. 
Since $\shP$ is integral, the map $\beta: \shP \to \shM_X$ extends to a map $\beta: \shP^{gp} \to \shM_X^{gp}$ which we will again denote by $\beta$ by abuse of notation. Let $(x,\phi)\in X^{KN}$ so that $x \in X$ and $\phi \in \Hom(M^{gp}_{X,x},S^1)$ satisfying $\phi (h)=\frac{h(x)}{|h(x)|} ~~ \mathrm{for~~ any~~} h \in \mathcal{O}_{X,x}^{\times}$. Define \[\phi_{\shP}:= \phi \circ \beta \in \Hom(\shP^{gp},S^1),\] and let
\begin{eqnarray}
\nonumber
\Psi:X^{KN} & \lra & X \times \Hom(\shP^{gp},S^1) \\
\nonumber
(x,\phi) & \longmapsto & (x,\phi_{\shP})
\end{eqnarray}
We will show that $\Psi$ is an embedding onto
\[ S:= \{ (x,\phi) \in X \times \Hom(\shP^{gp},S^1) ~~|~~(\alpha \circ \beta(p))(x) =\phi(p) \cdot  |(\alpha \circ \beta(p))(x)| \}.\]
Clearly, $\operatorname{Im} \Psi \subseteq S$. We will show that for any $(x,\phi_{\shP}) \in S$, there there exists a unique $(x,\phi) \in X^{KN}$, such that $\Psi(x,\phi)=(x,\phi_{\shP})$.
For $\phi_{\shP} \in \Hom(\shP^{gp},S^1) \cong \Hom(\shP,S^1)$ and $\shM_X$ the log structure associated to $\shP$ we have the cartesian diagram
\[
\xymatrix{
        \beta^{-1}(\mathcal{O}_{X,x}^{\times}) \ar@{^{(}->}[r] \ar[d]^{\alpha \circ \beta} & \shP \ar[d] \\
       \mathcal{O}_{X,x}^{\times} \ar[r]       & \shM_{X,x}}
\] 
Moreover this diagram is co-Cartesian, and by the universal property of the fibered coproduct there exists a unique map $\phi \in \Hom(\shM_{X,x},S^1)$ making the following diagram commute
\[
\xymatrix@C=30pt
{\beta^{-1}(\mathcal{O}_{X,x}^{\times}) \ar@{^{(}->}[r] \ar[d]^{\alpha \circ \beta}&\shP\ar[d]\ar@/^/[ddr]^{\phi_{\shP}}&\\
\mathcal{O}_{X,x}^{\times} \ar[r]\ar@/_/[drr]^{\arg \circ \operatorname{ev}_x}&\shM_{X,x}
\ar@{-->}[rd]^{\phi}&\\
&&S^1
}
\]
The map $\phi$ defines a unique element in $\Hom(\shM_{X,x}^{gp},S^1)$ with $\phi(h)=\frac{h(x)}{|h(x)|}$ for $h \in \mathcal{O}_{X,x}^{\times}$. Hence, the result follows.
%Note that to stay consistent with most references by abuse of notation we denoted $(\alpha_X\circ \beta(p))(x)$ as $(\beta(p))(x)$. However, we will sometimes use the convention $(\alpha_X\circ \beta(p))(x)$, in the remaining part of this article.
\end{proof}

The following is an immediate corollary of Proposition \ref{topology ok KN}.
\begin{corollary}
Let $(X,\shM_X)$ be log space, and assume $\M_X$ is fine. Then, the space $X^{KN}$ consists of points $(x,\Phi) \in X \times \Hom(\shM, \RR_{\geq 0} \times S^1)$ where $\Phi$ fits into the following commutative diagram

\begin{displaymath}
    \xymatrix{
        \shM_X \ar[r]^{\Phi} \ar[d]^{\alpha_X} & \RR_{\geq0} \times S^1 \ar[d]^{\alpha_{\Pi}} \\
        \mathcal{O}_{X,x} \ar[r]^{ev_x}       & \CC }
\end{displaymath}
where $\alpha_{\Pi}$ denotes the structure homomorphism on the polar log point.
\end{corollary}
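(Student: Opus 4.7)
The plan is to prove the corollary by unfolding Definition \ref{def:KN}, which characterizes $X^{KN}$ as $\Hom(\Pi^{\dagger}, X^{\dagger})$. By Definition \ref{log morphism}, such a morphism of log spaces consists of an underlying map of analytic spaces $f : \Pi \to X$ together with a morphism of sheaves of monoids $h : f^{-1}\shM_X \to \shM_{\Pi}$ compatible with the structure homomorphisms. Since $\Pi = \Spec\CC$ is a point, the first datum is equivalent to a choice of point $x \in X$, and at this point the pulled-back sheaf $f^{-1}\shM_X$ is simply the stalk $\shM_{X,x}$.

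The second datum $h$ therefore reduces to a single monoid homomorphism $\Phi : \shM_{X,x} \to \shM_{\Pi} = \RR_{\geq 0} \times S^1$. The required compatibility of $h$ with the structure maps $\alpha_X$ and $\alpha_{\Pi}$, together with the identification of the pull-back of $\mathcal{O}_X$ at the chosen point with the evaluation map $\ev_x : \mathcal{O}_{X,x} \to \CC$, translates verbatim into the commutative square displayed in the corollary. Letting $x$ range over $X$ and $\Phi$ over the appropriate subset of $\Hom(\shM_X, \RR_{\geq 0} \times S^1)$ then yields the claimed identification of $X^{KN}$.

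The only real check is consistency with the explicit description of $X^{KN}$ given right after Definition \ref{def:KN} as pairs $(x,\phi)$ with $\phi \in \Hom(\shM^{gp}_{X,x}, S^1)$ satisfying $\phi(h) = h(x)/|h(x)|$ on units. The correspondence is $\Phi(m) = \bigl(|\alpha_X(m)(x)|,\, \phi(m)\bigr)$: the first coordinate is forced by commutativity with $\alpha_{\Pi}$, and the second coordinate recovers $\phi$ by projection onto $S^1$ and passage to the group completion. The log structure axiom, namely that any $m \in \shM_{X,x}$ with $\alpha_X(m)(x) \neq 0$ lies in $\alpha_X^{-1}(\mathcal{O}_{X,x}^{\times})$, makes this correspondence bijective without further work.

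Since the statement is essentially a reformulation of the definition of a morphism of log schemes specialized to the polar log point as source, I do not expect any substantive obstacle. The proof is formal bookkeeping, and fineness of $\shM_X$ enters only implicitly via Proposition \ref{topology ok KN} to guarantee that $X^{KN}$ inherits a reasonable topology; it plays no role in the set-theoretic description stated here.
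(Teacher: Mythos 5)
Your proof is correct. The paper offers no written argument for this corollary beyond declaring it ``immediate'' from Proposition \ref{topology ok KN}, whose proof goes through a chart $\shP\to\shM_X$ and the universal property of the associated log structure; you instead unwind Definition \ref{def:KN} directly, identifying a log morphism $\Pi^{\dagger}\to X^{\dagger}$ with a point $x\in X$ plus a monoid homomorphism $\Phi:\shM_{X,x}\to\RR_{\geq0}\times S^1$ commuting with $\alpha_X$, $\ev_x$ and $\alpha_{\Pi}$, which is verbatim the displayed square. This chart-free route is arguably cleaner for the set-theoretic statement, and your consistency check against the paper's explicit description of $X^{KN}$ is the right one: commutativity forces the $\RR_{\geq0}$-coordinate to be $|\alpha_X(m)(x)|$ in all cases (taking moduli), the $S^1$-coordinate factors through $\shM_{X,x}^{\gp}$ since $S^1$ is a group, and the log structure axiom pins it down on units. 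Your closing observation that fineness is irrelevant to the set-theoretic identification and only matters for the topology (via Proposition \ref{topology ok KN}) is accurate and worth noting, since the paper's phrasing obscures this.
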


%The following is an immediate corollary of Proposition %\ref\ref{topology ok KN}.
%\begin{corollary}
%\label{Xkn homeomorphism}
%Let $X= \Spec \CC[\shM]$ be a fine log scheme. Then $X^{KN}$ is homeomorphic to $\Hom(\shM, \RR_{\geq 0} \times S^1)$.
%\end{corollary}

\begin{lemma}
\label{X toric}
Let $X$ be an affine toric variety, with associated fan $\sigma \subset M_{\RR}$, where $M\cong \ZZ^n$. Let $\shM_{(X,D)}$ denote the divisorial log structure on $X$ where $D$ is toric boundary divisor. Then, there is a homeomorphism 
\[ X^{KN} \cong \sigma \times T^n \]
where $T^n$ denotes the compact $n$-torus.
\end{lemma}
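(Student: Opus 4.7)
\emph{Proof plan.} The strategy is to use the canonical chart for the divisorial log structure and the description of $X^{KN}$ supplied by Proposition~\ref{topology ok KN}, and then to split the resulting space of monoid homomorphisms into a radial and an angular factor. Write $X=\Spec\CC[P]$ for the toric monoid $P$ associated with $\sigma$, so that by Proposition~\ref{toric fine} the divisorial log structure $\shM_{(X,D)}$ admits the canonical chart $\beta\colon P\to\Gamma(X,\O_X)$. By the same argument as in the proof of Proposition~\ref{topology ok KN} in its symmetric form using the polar target $\RR_{\geq 0}\times S^1$, one identifies $X^{KN}$ with the set of pairs $(x,\Phi)\in X\times \Hom_{\mathrm{Mon}}(P,\RR_{\geq 0}\times S^1)$ satisfying the compatibility $\alpha_{\Pi}\circ \Phi=\ev_x\circ\alpha_X\circ\beta$.

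The key observation is that this compatibility condition renders the point $x$ redundant: since $X=\Spec\CC[P]$, a point $x\in X$ is the same datum as a monoid homomorphism $P\to(\CC,\cdot)$, and the compatibility forces this homomorphism to agree with $\alpha_\Pi\circ\Phi$. Projection onto $\Phi$ is therefore a homeomorphism
\[
X^{KN}\;\cong\;\Hom_{\mathrm{Mon}}\bigl(P,\,\RR_{\geq 0}\times S^1\bigr),
\]
and since the target is a direct product of multiplicative monoids this splits canonically as $\Hom_{\mathrm{Mon}}(P,\RR_{\geq 0})\times \Hom_{\mathrm{Mon}}(P,S^1)$.

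It remains to identify the two factors. For the angular part, $S^1$ is a group, so $\Hom_{\mathrm{Mon}}(P,S^1)=\Hom(P^{\gp},S^1)=\Hom(M,S^1)\cong T^n$ by toricity of $P$, which ensures $P^{\gp}\cong M\cong \ZZ^n$. For the radial part I would invoke \eqref{eq: positive real locus} together with Proposition~\ref{Prop: Momentum map for affine toric}: by definition the positive real locus $X_{\geq 0}\subset X$ is precisely $\Hom_{\mathrm{Mon}}(P,\RR_{\geq 0})$, and the momentum map restricts to a homeomorphism $X_{\geq 0}\xrightarrow{\sim}\sigma$. Assembling these identifications yields the claimed homeomorphism $X^{KN}\cong\sigma\times T^n$. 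The main point requiring care, which I expect to be the only real obstacle, is to check that this bijection is a homeomorphism for the closed-subset topology on $X^{KN}\subset X\times\Hom(P^{\gp},S^1)$ from Proposition~\ref{topology ok KN}; this follows formally because the projection $(x,\Phi)\mapsto\Phi$ is continuous and its inverse reconstructs $x$ continuously from $\Phi$ via the relation $\ev_x=\alpha_\Pi\circ\Phi$, so continuity propagates through the product decomposition to the standard topology on $\sigma\times T^n$.
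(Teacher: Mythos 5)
Your proof is correct and takes essentially the same route as the paper's: both use the canonical chart from Proposition~\ref{toric fine} to identify $X^{KN}$ with $\Hom(P,\RR_{\geq 0}\times S^1)$, split this into an angular factor $\Hom(P^{\gp},S^1)\cong T^n$ and a radial factor, and identify the latter with the positive real locus of \eqref{eq: positive real locus}, homeomorphic to the cone. The only difference is that you justify the identification $X^{KN}\cong\Hom(P,\RR_{\geq 0}\times S^1)$ directly (observing that $x$ is recovered from $\Phi$), where the paper simply cites \cite[Example 2.3.2]{FK}.
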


\begin{proof}
By \ref{toric fine}, the divisorial log structure is fine and there is a canonical chart 
\begin{eqnarray}
\nonumber
\alpha: \check{\sigma} \cap N & \lra & \CC [\check{\sigma} \cap N] \\
\nonumber
(n_1,\cdots,n_d) & \longmapsto & z_1^{n_1}\cdots z_d^{n_d}
\end{eqnarray}
for $(n_1,\cdots,n_d) \in \check{\sigma} \cap N$, so $d= \dim X$, where $\check{\sigma}$ is the dual cone to $\sigma$. Moreover, by \cite[Example 2.3.2]{FK}, there is a splitting 
\begin{equation}
\label{Eq: XKN}
X^{KN}\cong \Hom((\check{\sigma} \cap N),\RR_{\geq0} \times S^1)\cong \Hom((\breve{\sigma} \cap N),\RR_{\geq0} )\times \Hom((\breve{\sigma} \cap N), S^1).
\end{equation}
The second factor in the right hand side of \eqref{Eq: XKN} is isomorphic to the compact torus $\Hom(\ZZ^n, S^1) \cong T^n$. The first factor, $\Hom((\breve{\sigma} \cap N),\RR_{\geq0} )$ is the positive real locus $X_{\geq 0 }$ defined in \eqref{eq: positive real locus}, which is a real manifold with corners of dimension $n$ \cite[\S10]{Oda}. Moreover, from the discussion in \S\ref{Sect: toric varieties} it follows that there is a homeomorphism
$X_{\geq 0 } \cong \breve{\sigma}$ (see also \cite[\S4.2]{Fulton}). Hence, the result follows.
\end{proof}
Since a projective toric variety admits an affine chart build by affine toric varieties, we can generalise Lemma \ref{X toric} to projective toric varieties.
\begin{example}
\label{P2 divisorial}
Let $X= \mathbb{P}^1$ be endowed with the log structure defined by the toric boundary divisor $D=\{0,\infty\}$. Recall that the moment map image of $\mathbb{P}^1$ is $[-1,1]$.  Hence, $X^{KN}\cong [0,1] \times S^1$.
\end{example}

%\begin{figure}
 %   \centering
  %  \includegraphics{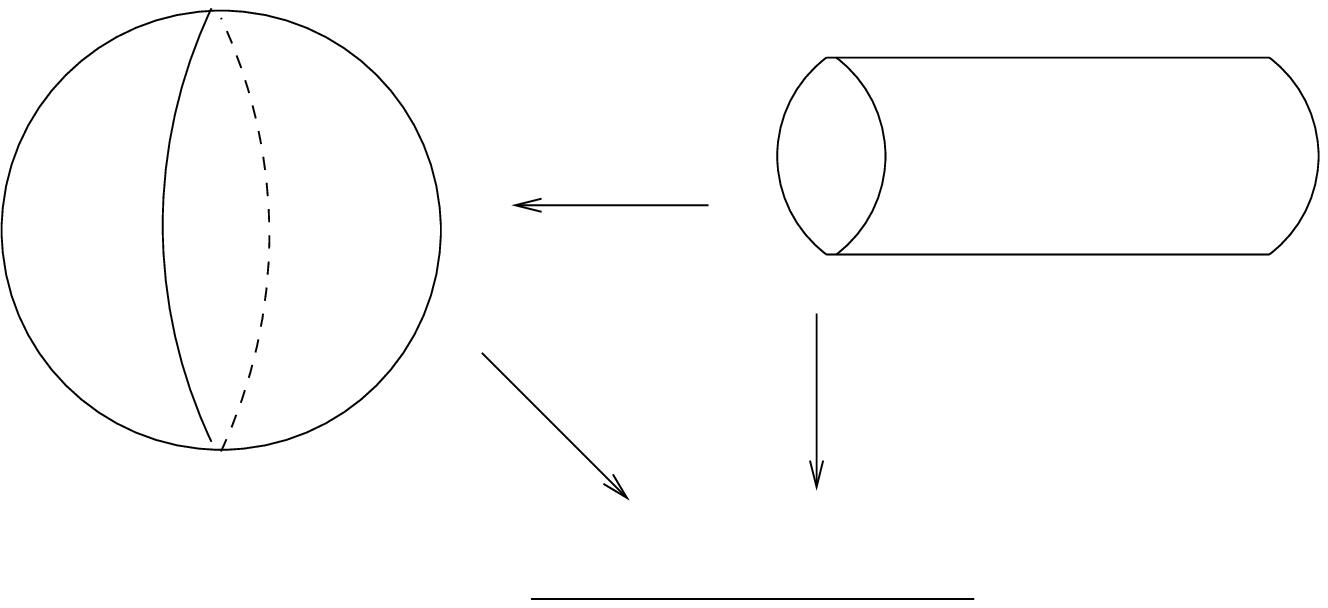}
   % \caption{The Kato--Nakayama space over $\PP^1$ with maps to $\PP^1$ and to $[-1,1]\subset \RR$}
    %\label{fig:my_label}
%\end{figure}

We next discuss the topology on the Kato--Nakayama space. Proposition \ref{topology ok KN} defines a natural topology on the Kato--Nakayama space of log spaces that admit a fine log structure. In general, when the log structure is not fine, we describe the topology on the Kato--Nakayama space as follows \cite{NO}. Let $(X,\M_X)$ be a log analytic space. Let
\begin{eqnarray}
\label{Eq:tau}    
    \tau_X: X^{KN} & \lra & X \\
    \nonumber
    (x,\phi) & \longmapsto & x
\end{eqnarray}
be the projection map, and corresponding
to each section $m$ of $\M_X$ on an open subset $U$ define the function
\begin{eqnarray}  
    \mathrm{arg}(m): \tau_X^{-1}(U) & \lra & S^1 \\
    \nonumber
    (x,\phi) & \longmapsto & \phi(m_x)
\end{eqnarray}
 We endow $X^{KN}$ with the weakest topology such that the functions $\tau_X$ and $\mathrm{arg}(m)$ as $m$ ranges over the local sections of $\M_X$ are continuous.

\begin{remark}
Let $(X,\M_X)$ be a log analytic space. Assume $\M_X$ satisfies the following property: if $m \in \M_X$, $u' \in \M_X^{\star}$ and
$m + u = m$, then $u'= 0$. Then, by \cite[Chapter V Prop 1.2.5]{Ogus}, the map $\tau_X: X^{KN} \to X$ defined in \eqref{Eq:tau} is surjective. In this case, for a point $x \in X$, 
\[\tau^{-1}_X(x) \cong T^r = (S^1)^r \] if ${\overline{\shM}}^{gp}_{X,x} \cong \ZZ^r$.
\end{remark}
\begin{example}
Let $X$ be a log space with trivial log structure. Then $X^{KN}$ is homeomorphic to $X$.
Note that the topology on $X$ and the topology on $X^{KN}$ are the same in this case. 
\end{example}

\begin{example}
Let $X= \mathbb{A}^1$, endowed with the divisorial log structure given by the closed point $\{0\}$. Then, ${\overline{\shM}}_{X,0}\cong \NN$, and for any $x\neq 0$ the stalk ${\overline{\shM}}_{X,x}$ trivial. By Lemma \ref{X toric} it follows that $X^{KN}\cong \RR_{\geq 0} \times S^1$ and the natural projection map $\tau_X: X^{KN}\to X$, defined as in \eqref{Eq:tau} satisfies
$$ {\tau}_{X}^{-1}(t)= \left\{
	\begin{array}{ll}
		t  & \mbox{if } t \neq 0 \\
		S^1 & \mbox{if } t = 0
	\end{array}
\right.              $$
So, $X^{KN}$ is homeomorphic to the oriented real blow up of $\mathbb{A}^1$ at the origin, as illustrated in Figure \ref{fig:oriented real blow up}. For details see \cite[Chapter V,\S1.2]{Ogus}. 

\begin{figure}
    \centering
    \includegraphics{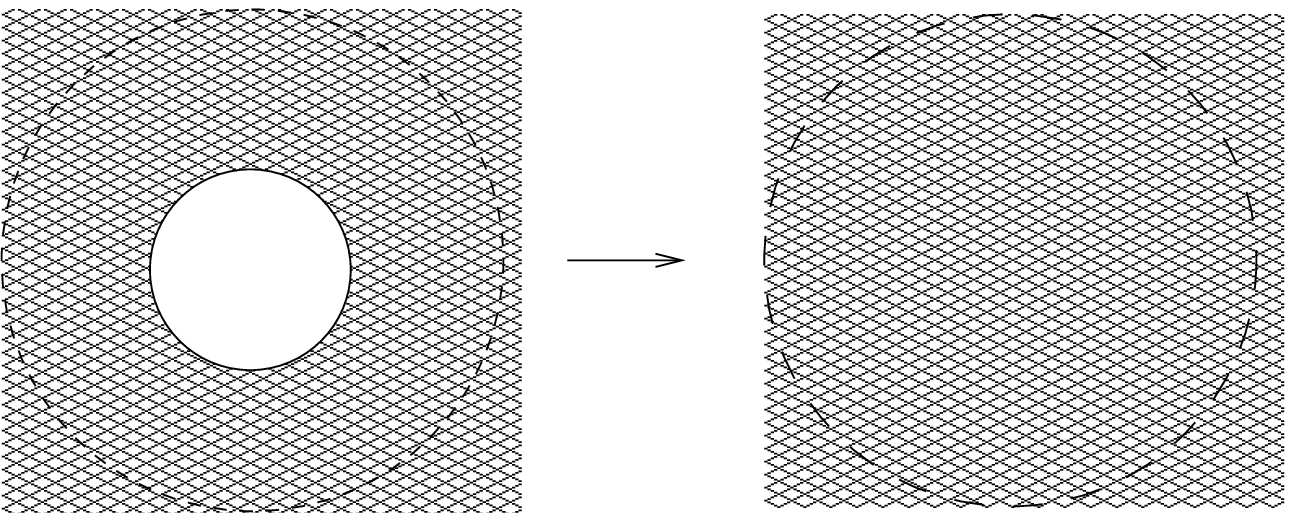}
    \caption{The oriented real blow-up of $\AA^1$ at the origin}
    \label{fig:oriented real blow up}
\end{figure}

\end{example}

\section{Torus Fibrations on $\mathcal{X}_t \cong \mathcal{X}_0^{KN}(\xi)$: the case of K3-surfaces}
\label{sec: example}
In this section we investigate the Kato--Nakayama space of a toric log Calabi--Yau, $\mathcal{X}_0^{KN}$ and describe a torus fibration on it. To do this, we will first consider the Kato--Nakayama spaces over each toric irreducible components over $\mathcal{X}_0$, and analyse how they glue together. For simplicity we will assume that the log structure on the central fiber is determined by trivial gluing data \cite[Remark 4.16]{AS}. The following proposition, which appears as Lemma $5.13$ in \cite{Gross-Siebert3}, is a crucial result, which explains how the log structure on $\mathcal{X}_0$ interacts with log structures of its irreducible components.  
\begin{proposition}
\label{splitting}
Let $\pi: \shX \to \Spec R$ be a toric degeneration and let $(B,\P)$ denote the dual intersection complex $B$ endowed with a polyhedral decomposition $\P$. For $\tau\in\P$, let $q_{\tau}:X_{\tau}\rightarrow \mathcal{X}_0$ be the
natural inclusion map, and we will also write here $q_{\tau}$ for the
restriction $q_{\tau}: X_{\tau}\setminus q_{\tau}^{-1}(Z)\rightarrow
\mathcal{X}_0\setminus Z$ where $Z$ denotes the log singular locus.  We write $\M_{\tau}:=q_{\tau}^*\M_{\mathcal{X}_0\setminus Z}$,
the pull-back of the log structure. For any $\tau\in\P$, denote by $D_{\tau}\subseteq X_{\tau}$
the toric boundary of $X_{\tau}$. Then, there is a natural exact sequence
\[
0\lra \M_{(X_{\tau},D_{\tau})}^{\gp}
\lra \M_{\tau}^{\gp}\lra \breve{\Lambda}_{\tau}\oplus\ZZ
\lra 0
\]
on $X_{\tau}\setminus q_{\tau}^{-1}(Z)$, where $\M_{(X_{\tau},D_{\tau})}$ is the divisorial log structure on $X_{\tau}$, and $\breve{\Lambda}_{\tau}$ denotes the integral cotangent lattice on $\tau$. This exact sequence splits,
and the splitting is canonical if  $\dim\tau=0$, that is the dimension of the orbit $X_{\tau}$ is maximal.
\end{proposition}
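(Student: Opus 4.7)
My plan is to exploit the étale-local toric model guaranteed by Definition~\ref{Def: toric degeneration}: each point $x \in X_{\tau} \setminus q_{\tau}^{-1}(Z)$ has a neighbourhood in $\shX$ of the form $\Spec \CC[P_x]$, where $P_x$ is a toric monoid and $t = z^{\rho_x}$ for some $\rho_x \in P_x$ interior to $P_x$. In this model the log structure on $\shX$ is the canonical toric one, i.e.\ the divisorial log structure with chart $P_x \to \CC[P_x]$, and pulling back along $X_{\tau} \hookrightarrow \mathcal{X}_0 \hookrightarrow \shX$ gives an explicit chart for $\M_{\tau}$ at $x$. Similarly, $\M_{(X_{\tau},D_{\tau})}$ at $x$ is the intrinsic toric log structure, whose chart is the sub-monoid of $P_x^{\gp}$ dual to the tangent cone of $X_{\tau}$ at $x$, intersected with $P_x^{\gp}$.

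Next, I would construct the projection $\M_{\tau}^{\gp} \to \breve{\Lambda}_{\tau} \oplus \ZZ$ in two parts. The $\ZZ$-factor is defined via the order of vanishing along $\mathcal{X}_0$: a local lift of a section of $\M_{\tau}^{\gp}$ to a monomial $z^p \in \CC[P_x^{\gp}]$ gets sent to the exponent of $t$ in $z^p$, namely the unique integer $k$ such that $p - k\rho_x$ lies in the face of $P_x^{\gp}$ corresponding to $\mathcal{X}_0$. The $\breve{\Lambda}_{\tau}$-factor is extracted by further projecting that face onto the quotient by the characters annihilating the tangent space of $\tau$; intuitively, this records the ``normal-to-$\tau$-inside-$\mathcal{X}_0$'' direction, which by the combinatorics reviewed in \S\ref{subsect: The intersection complex} is exactly an element of $\breve{\Lambda}_{\tau}$. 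Exactness then follows from elementary toric linear algebra: a section lies in the kernel of both maps precisely when a representing monomial involves only characters constant along $\rho_x$ and along the normal directions to $\tau$, i.e.\ characters pulled back from $X_\tau$ that are regular away from $D_\tau$, which is exactly $\M_{(X_{\tau},D_{\tau})}^{\gp}$.

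For the splitting, the $\ZZ$-factor always admits a splitting given by any local lift of $1 \in \NN = \overline{\M}_{\Spec \CC^{\dagger}}$ through the log morphism $\pi_0^{\dagger}$ in \eqref{Eq: pi-0}; when $\dim \tau = 0$ one has $\breve{\Lambda}_{\tau} = 0$, so this single lift gives a splitting of the whole sequence, and it is canonical precisely because $\pi_0^{\dagger}$ is a global log morphism so the generator of the base ghost sheaf pulls back unambiguously. For $\dim \tau > 0$, splitting the $\breve{\Lambda}_{\tau}$ summand requires choosing a lift of the normal cotangent directions, which étale-locally exists (e.g.\ by picking an adjacent maximal cone), but globally depends on such a choice, which is why only the existence and not the canonicity is asserted. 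The main obstacle I anticipate is verifying that the constructions in paragraph two are independent of the chosen étale toric chart and therefore glue to a well-defined sequence of sheaves on $X_{\tau} \setminus q_{\tau}^{-1}(Z)$; this should follow from the fact that both the order of vanishing along $\mathcal{X}_0$ and the projection to $\breve{\Lambda}_{\tau}$ are intrinsically characterised (by vanishing orders on the central fibre and by the integral affine structure on $B$ respectively), so any two toric charts must induce the same maps on ghost sheaves.
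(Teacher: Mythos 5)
First, a point of comparison: the paper offers no proof of this proposition at all --- it is quoted verbatim as Lemma~5.13 of \cite{Gross-Siebert3}, so your attempt can only be measured against that source. Your overall strategy does match the one used there: reduce to the \'etale-local toric models $\Spec\CC[P_x]$ with $t=z^{\rho_x}$ that Definition~\ref{Def: toric degeneration} guarantees away from $Z$, read off the surjection onto the $\ZZ$-summand as the $t$-order and onto $\breve{\Lambda}_\tau$ from the ghost-sheaf data transverse to $X_\tau$, and split the $\ZZ$-summand by the global section of $\M_{\mathcal{X}_0}$ supplied by $\pi_0^{\dagger}$ in \eqref{Eq: pi-0}. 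In particular your explanation of why the splitting is canonical exactly when $\dim\tau=0$ --- the $\ZZ$-part always splits canonically via $t$, $\breve{\Lambda}_\tau$ vanishes for a vertex, and for $\dim\tau>0$ one must additionally choose a lift of $\breve{\Lambda}_\tau$, e.g.\ via an adjacent maximal cell --- is exactly the right one.

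There is, however, a genuine gap in how you propose to close the argument. Your chart-independence step (``any two toric charts must induce the same maps on ghost sheaves'') only controls the surjection $\M_\tau^{\gp}\to\breve{\Lambda}_\tau\oplus\ZZ$, which indeed factors through $\overline{\M}{}_\tau^{\gp}$. It says nothing about the injection $\M_{(X_\tau,D_\tau)}^{\gp}\to\M_\tau^{\gp}$, which does \emph{not} factor through ghost sheaves and whose very construction is the substantive content of the lemma: one must exhibit, for each section of $\M_{(X_\tau,D_\tau)}$, a distinguished lift through the structure map $\alpha_\tau\colon\M_\tau\to\O_{X_\tau}$ and check that these lifts glue when the local models are patched by the open gluing data of \S\ref{subsec: Open Gluing Data}. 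This is precisely where \cite{Gross-Siebert3} does real work, and it is also why the present paper assumes trivial gluing data before invoking the proposition in \S\ref{sec: example}. Relatedly, your description of the chart for $\M_{(X_\tau,D_\tau)}$ (``the sub-monoid of $P_x^{\gp}$ dual to the tangent cone \ldots intersected with $P_x^{\gp}$'') is garbled; what is wanted is the face of $P_x$ cut out by the stratum $X_\tau$, mapping to $\M_\tau$ via the inclusion of that face into $P_x$. Finally, one should also justify that the cokernel is the \emph{constant} sheaf $\breve{\Lambda}_\tau\oplus\ZZ$ on all of $X_\tau\setminus q_\tau^{-1}(Z)$ rather than a nontrivial local system; this uses that the monodromy transformations \eqref{eq: monodromy} fix $\Lambda_\tau$ pointwise, which your chart-by-chart construction does not by itself rule out. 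None of these gaps is fatal, but as written the exactness and global well-definedness are asserted rather than proved.
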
 

Now, let $(\mathcal{X}_0,\M_{\mathcal{X}_0})$ be a toric log Calabi--Yau space, formed by unions of toric varieties glued along toric strata. Consider the intersection complex $B$, which is obtained from the dual intersection complex by a discrete Legendre transform \cite[\S4]{Gross-Siebert4}. Note that $B$ is topologically the same manifold as the dual intersection complex. However, the affine monodromy differs \cite[\S3]{AP}. Let $\Delta \subset B$ be the discriminant locus. Then, on the log singular locus $Z:=\mu^{-1}(\shA)$, where $\shA$ denotes the amoeba image of the singular locus $\Delta$, the log structure $\shM_{\mathcal{X}_0}$ is not fine, and there does not exist a chart. Indeed analysing the Kato--Nakayama space of $\mathcal{X}_0$ in this case is rather challenging. However, we can easily describe the Kato--Nakayama space away from the log singular locus as follows: Let $Y_i$ be a toric irreducible component of $\mathcal{X}_0$, and let $\mathring{Y_i}:=Y_i\setminus Z$ with the induced log structure
\[\restr{\shM}{\mathring{Y_i}}:=\restr{\shM_{\shX_0}}{\mathring{Y_i}} \]
Note that the induced log structure $(Y_i,\restr{\shM}{\mathring{Y_i}})$ is fine on $\mathring{Y_i}$. So, we can define a chart for $\restr{\shM}{\mathring{Y_i}}$. Assume $\mathcal{P}\to \shM_{(Y_i,D_i)} $ is a chart for $(Y_i,\shM_{(Y_i,D_i)}$ with the divisorial log structure given by the toric boundary divisor $D_i \subset Y_i$. Then, it follows from Proposition \ref{splitting} that a chart for the log structure $\restr{\shM}{\mathring{Y_i}}$ is given by $\shP \oplus \NN$. Let $\sigma_i$ be the moment map image of $Y_i$.  From Lemma \ref{X toric}, it follows that there is a canonical homeomorphism $\mathring{Y_i}^{KN} \cong (\sigma_i \setminus \Delta) \times \Hom({\Lambda}_{\sigma_i},S^1) \times S^1$,
where the latter $S^1$-factors comes due to the fact that a chart for $\restr{\shM}{\mathring{Y_i}}$ is given by $\shM \oplus \NN$. Hence, each copy $Y_i^{KN}$ is a trivial $T^{n+1}$ fibration over $\sigma_i \setminus \Delta$. This induces an $T^{n+1}$ fibration $\tilde{\mu}: \shX_0^{KN}\to B$ given by the composition $\mu \circ \tau_{\shX_0 \setminus Z}$, 
\[
\xymatrix@C=30pt
{{(\shX_0 \setminus Z)}^{KN} \ar@{-->}[rrd]^{\tilde{\mu}} \ar@{->}[rr]^{\tau_{\shX_0 \setminus Z}}  & & (\shX_0 \setminus Z) \ar[d]^{\mu}&\\
& & B \setminus \Delta
}
\]
where $\tau_{\shX_0 \setminus Z}$ is the projection map defined as in \eqref{Eq:tau}, and by abuse of notation we denote the restriction of the generalized moment map $\mu: \shX_0 \to B$ to $\shX_0 \setminus Z$ again by $\mu$.
\begin{remark}
\label{restricted KN}
Let $O^{\dagger}= (\Spec \CC, \NN \oplus \CC^{\star})$ be the standard log point. Then, $(O^{\dagger})^{KN} \cong S^1$. For for any point $\xi \in S^1$, we have the following commutative diagram
$$
    \xymatrix{
        \shX_0^{KN}  \ar[d] & \shX_0^{KN}(\xi)  \ar[l] \ar[d] \\
        S^1=(O^{\dagger})^{KN}     & \xi \ar[l] }
$$
where the horizontal arrows are inclusions, and $\shX_0^{KN}(\xi)$ denotes the fiber of the  Kato-Nakayama space $\shX_0^{KN}$ over $\xi$. We will use the analogous notational convention for the Kato-Nakayama space over the irreducible components $Y_i^{KN}$. So we obtain $\shX_0^{KN}(\xi)$ as a $T^n$ bundle over $B$, away from the discriminant locus $\Delta \subseteq B$. 
\end{remark}

Recall that the fiber of the Kato--Nakayama space $\shX_0^{KN}(\xi)$ is homeomorphic to the general fiber of the smoothing of the toric log Calabi--Yau space $\shX_0$ by Theorem \ref{Thm: AS}. To illustrate this on a concrete example, we will investigate a degeneration of $K3$-surfaces.
\begin{example}
Let $\shX=\{ X_0X_1X_2X_3+t\cdot f_4=0 \}\subset \mathbb{P}^3\times\mathbb{A}^1_t$ for a generic degree $4$ homogenous polynomial $f_4$. This defines a toric degeneration, where the general fiber, $\shX_t$, over $t \neq 0$ is a K$3$ surface and the central fibre $\shX_0=(X_0X_1X_2X_3=0) \subseteq \mathbb{P}^3$ consists of $4$ copies of the projective space $\mathbb{P}^2$, meeting pairwise along $\mathbb{P}^1$. Then the intersection complex $B$ associated to $\shX_0$, as a topological manifold, is the boundary of a tetrahedron. We will in a moment describe the discriminant locus of the integral affine structure with singularities on $B$, as the image of the log singular locus under the generalized (abstract) moment map $\mu: \shX_0 \to B$, defined as in \cite[Definition 2.3]{AS}. To carry out this computation concretely, assume 
\begin{equation}
\label{Eq f4}
f_4 = X_0^4+X_1^4+X_2^4+X_3^4 - 7(X_0^2X_1^2+X_0^2X_2^2+X_0^2X_3^2+X_1^2X_2^2+X_1^2X_3^2+
X_2^2X_3^2)
\end{equation}
Then, over each edge of $B$, the inverse image of $\mu$ is homeomorphic to a copy of the projective line $\mathbb{P}^1$. Without loss of generality, consider \[\mathbb{P}^1[X_0:X_1]:=\{ (X_0:X_1:X_2:X_3) \in \mathbb{P}^2~~|~~X_2=X_3=0    \}.\] Inserting $X_2=X_3=0$ in Equation \eqref{Eq f4}, we obtain
$$X_0^4+X_1^4 - 7(X_0^2X_1^2)=0$$
Dehomogenising with $X_1$ and inserting $x=X_0 / X_1$ we get 
$$x^4-7x^2+1=(x^2-3x+1)(x^2+3x+1)=0$$
Thus, over each edge we get four real roots 
\[ \left[ \frac{3+\sqrt{5}}{2}:1:0:0 \right],\left[\frac{3-\sqrt{5}}{2}:1:0:0\right],\left[\frac{-3+\sqrt{5}}{2}:1:0:0\right],\left[\frac{-3-\sqrt{5}}{2}:1:0:0\right].\] Doing the analogous computation for the other edges, we obtain in total $24$ points. On an affine chart, around each such point the local equation is given analogously as in \cite[Example 3.20]{Gross}, and therefore the log structure is not fine around these points. Hence the discriminant locus $\Delta \subset B$, consists of $24$ points as illustrated in Figure \ref{Fig: tetrahedron}.

\begin{figure}
\center{\input{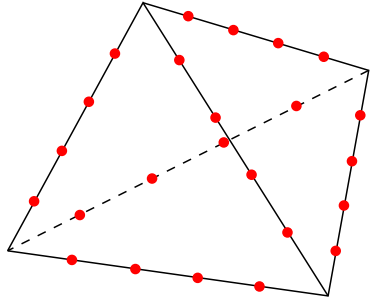}}
\caption{$(B,\P)$ for a quartic K3 surface}
\label{Fig: tetrahedron}
\end{figure}
By Remark \ref{restricted KN}, away from $\Delta$, 
\[ (\mathcal{X}_0 \setminus Z)^{KN}(\xi) \to B \setminus \Delta\]
is an $S^1 \times S^1$-fibration over $B \setminus \Delta$. Over each point $p \in \Delta$, the  fiber of the Kato-Nakayama space $(\shX_0)^{KN}$ is a nodal elliptic curve, by the analysis done in Examples $2.9$ and $2.10$ in \cite{AS}. This defines a singular torus fibration 
\[ (\mathcal{X}_0)^{KN}(\xi) \to B \]
on a $K3$-surface $\shX_t \cong (\mathcal{X}_0)^{KN}(\xi)$.

\end{example}

\end{document}